\documentclass[a4paper]{article}
\usepackage[margin=1in]{geometry}

\usepackage{amsmath,amsthm,amssymb,bbm,hyperref,enumitem,graphicx,caption,subcaption}
\usepackage{natbib}
\usepackage{xcolor}
\usepackage{float}

\hypersetup{colorlinks,citecolor=purple,urlcolor=black}%

\newcommand{\N}{\mathbb{N}}

\newcommand{\R}{\mathbb{R}}

\newcommand{\E}{\mathbb{E}}

\newcommand{\cX}{\mathcal{X}}

\newcommand{\cH}{\mathcal{H}}
\newcommand{\cJ}{\mathcal{J}}
\newcommand{\cA}{\mathcal{A}}

\newcommand{\cI}{\mathcal{I}}

\newcommand{\ind}{\mathbbm{1}}
\newcommand{\RN}[1]{\textup{\uppercase\expandafter{\romannumeral#1}}}

\newcommand{\iid}{\overset{\mathrm{iid}}{\sim}}

\newcommand{\given}{\,|\,}

\makeatletter
\newcommand*\bigcdot{\mathpalette\bigcdot@{.5}}
\newcommand*\bigcdot@[2]{\mathbin{\vcenter{\hbox{\scalebox{#2}{$\m@th#1\bullet$}}}}}
\makeatother

\DeclareMathOperator\Var{Var}

\newtheorem{theorem}{Theorem}
\newtheorem{lemma}[theorem]{Lemma}

\newtheorem{example}[theorem]{Example}

\makeatletter
\def\underbar{\underline}
\makeatother

\title{The augmented van Trees inequality}
\author{Elliot Young}
\usepackage{authblk}
\author[1]{{Elliot H.\ Young}
\thanks{ey244@cam.ac.uk}}
\affil{Statistical Laboratory, University of Cambridge, UK}
\date{March 2026}

\begin{document}

\maketitle

\begin{abstract} 
We introduce an augmented form of the van Trees inequality, that yields uniformly tighter lower bounds on the minimax squared Bayes risk of estimators compared with the classical van Trees inequality. Our augmented inequality also accommodates prior distributions whose densities need not vanish at the boundaries of their supports. We demonstrate how this refinement can be utilized for elementary proofs of a number of minimax lower bounds for nonparametric estimands, that also often attain sharper constants than those obtained by the alternative Le Cam convergence of experiments theory and the classical van Trees inequality, and in some cases obtain exact constants. As an example, our augmented van Trees inequality can be used to obtain the asymptotic minimax pointwise mean squared error when estimating the regression function in the model with normal errors: when the regression function is univariate and differentiable with Lipschitz derivative we obtain this quantity up to a constant factor of $1.37$; and in the high dimensional regime with a H\"older smooth regression function of smoothness $\beta\in(0,2]$ we obtain exact constants. Both these results do not follow from an application of the classical van Trees inequality. The flexibility of our augmented van Trees inequality accommodates lower bounds for models beyond Gaussianity, loss functions beyond the squared error loss, and we are also able to incorporate this augmentation into generalized versions of the van Trees inequality for irregular models.
\end{abstract}

\section{Introduction}\label{sec:intro}

The van Trees inequality~\citep{vantrees} is a well-celebrated result that offers a lower bound on the Bayesian risk of an estimator under a given prior, and is commonly viewed as a Bayesian Cram\'{e}r--Rao bound.
Its relevance to statistical theory was popularized by \citet{gill}, who demonstrated how the inequality can be leveraged to obtain minimax lower bounds. In particular, for $\sqrt{n}$-consistent parameter estimation problems, the van Trees inequality yields asymptotically sharp risk lower bounds, recovering classical results such as the Cram\'{e}r--Rao lower bound in parametric models as well as local asymptotic minimaxity results in $\sqrt{n}$-consistent semiparametric settings~\citep{vandervaart, gassiat}.

In nonparametric estimation, the van Trees inequality can provide a remarkably simple strategy to derive minimax lower bounds that circumvents the sophisticated convergence of experiments theory of~\citet{hajek1, lecam, assouad}. 
In a variety of nonparametric problems however these more sophisticated constructions can be utilized to provide tighter minimax lower bounds in terms of constants than those obtained by the van Trees inequality.

In this paper we develop an \emph{augmented van Trees inequality} that both expands the class of admissible priors and consequently yields strict improvements on the lower bound for the worst-case risk. 
This extension when applied to minimax lower bound constructions attempts to obtain the best of both worlds; providing a remarkably simple and off-the-shelf methodology as one obtains with the van Trees inequality, whilst also attaining tighter constants, which are often sharper than those obtained by the more involved convergence of experiments theory described above. We study as an example that of pointwise H\"older function estimation (see Section~\ref{sec:minimax} to follow), where we are in particular interested in attaining (near-)sharp constants in our minimax lower bounds. 

\subsection{The van Trees inequality, and outline of our contributions}

To fix notation, consider a parametric model $(P_t)_{t\in T}$ indexed by a compact interval $T=[t_1,t_2]$, with $-\infty < t_1 < t_2 < \infty$. Assume that $P_t \ll \nu$ for all $t \in T$ for some $\sigma$-finite measure $\nu$ on $(\mathcal{X}, \mathcal{A})$, and let $p(\cdot,t)$ denote the density of $P_t$ with respect to $\nu$, which we assume is absolutely continuous. 
The Fisher information we define as
\begin{equation*}
    \cI(t):=\int_\cX\biggl(\frac{\partial_t p(x,t)}{p(x,t)}\biggr)^2p(x,t)\,d\nu(x),
\end{equation*}
which we assume to be finite for all $t \in T$. 
Given an arbitrary estimator $\hat{t}(\boldsymbol{X})$ depending on data $\boldsymbol{X}\sim P_t$ we are primarily interested in obtaining a lower bound on the worst case risk
\begin{equation}\label{eq:Bayes-risk}
    \sup_{t\in T}\E_{P_t}\big[(\hat{t}(\boldsymbol{X})-t)^2\big] = \sup_{t\in T}\int_\cX (\hat{t}(x)-t)^2p(x,t)\,d\nu(x).
\end{equation}
The classical van Trees approach for minimax lower bounds~\citep[e.g.][]{tsybakov} bounds this quantity from below by the Bayes risk \begin{equation}\label{eq:sup-leq-prior}
    \sup_{t\in T}\E_{P_t}\big[(\hat{t}(\boldsymbol{X})-t)^2\big]
    \geq
    \int_T\E_{P_t}\big[(\hat{t}(\boldsymbol{X})-t)^2\big]\mu(t)\,dt
    =
    \int_T\int_\cX (\hat{t}(x)-t)^2p(x,t)\,d\nu(x)\,\mu(t)\,dt,
\end{equation}
for a suitable prior distribution on $T$ with absolutely continuous density $\mu$ with respect to Lebesgue measure, and finite prior information $\cJ(\mu):=\int_T\frac{(\mu')^2}{\mu}$. Note that here, and throughout the paper, we will take by default $1/0=0$, and so can write e.g.~$\int_T\frac{(\mu')^2}{\mu}\ind_{\{\mu(\cdot)>0\}}=\int_T\frac{(\mu')^2}{\mu}$.

The classical van Trees inequality~\citep{vantrees} offers a lower bound for~\eqref{eq:sup-leq-prior} provided the prior density is absolutely continuous and vanishes at the boundary of the support of $T$.  
For any statistic $\hat{t}(\boldsymbol{X})$ and any prior $\mu$ with $\mu(t_1)=\mu(t_2)=0$, 
\begin{equation}\label{eq:vT}
    \int_T\E_{P_t}\big[(\hat{t}(\boldsymbol{X})-t)^2\big]\mu(t)\,dt
    \geq
    \frac{1}{\int_T\cI(t)\mu(t)\,dt+\cJ(\mu)}.
\end{equation}
For $T=[-1,1]$ the infimum of $\cJ(\mu)$ over such priors is $\pi^2$, achieved by $\mu(t)=\cos^2({\pi t}/{2})$. Further, if $\cI(\cdot)=\cI$ constant (e.g.~as in the settings of Section~\ref{sec:minimax}) then the van Trees inequality simplifies to
\begin{equation}\label{eq:vTcurve}
    \sup_{\mu}\int_{-1}^1\E_{P_t}\big[(\hat{t}(\boldsymbol{X})-t)^2\big]\mu(t)\,dt
    \geq
    \frac{1}{\cI+\pi^2},
\end{equation}
with supremum taken over all valid priors as described above.

Our \emph{augmented van Trees} inequality extends the van Trees inequality in two key respects:
\begin{enumerate}[label=(\roman*)]
\item We allow for priors that need not vanish at the boundary of $T$; and
\item Our lower bound involves an 
\emph{augmentation function}, that when carefully chosen guarantees uniformly sharper lower bounds on the worst-case risk~\eqref{eq:Bayes-risk}. 
\label{item:beter-bounds}
\end{enumerate}
Regarding~\ref{item:beter-bounds}, we introduce two helpful representative bounds on the worst case risk that can be derived from our augmented van Trees inequality:
\begin{enumerate}
    \item\label{item:AVT1}
    Augmented van Trees 1 (AVT1): A simple to use, albeit non-optimal augmentation is 
    \begin{equation}\label{eq:AVT-simple}
    \sup_{t\in[-1,1]}\E_{P_t}\big[(\hat{t}(\boldsymbol{X})-t)^2\big]\geq
    \max\Biggl(
    \frac{1}{(\sqrt{\cI}+1)^2}
    ,\,
    \frac{1}{\cI+\pi^2}\Biggr).
    \end{equation} 
    \item\label{item:AVT2}
    Augmented van Trees 2 (AVT2): A sharper but more involved lower bound is
    \begin{equation}\label{eq:AVT}
    \sup_{t\in[-1,1]}\E_{P_t}\big[(\hat{t}(\boldsymbol{X})-t)^2\big]\geq\frac{1}{\inf_{m>0}(m+1)^2\, \bigl\{\hspace{0cm}_2F_1\bigl(-\tfrac{1}{2},\tfrac{m}{2},\tfrac{m}{2}+1;-\tfrac{\cI}{m^2}\bigr)\bigr\}^2},
\end{equation}
where $_2F_1$ is the hypergeometric function. 
\end{enumerate}
See e.g.~\citet{hypergeometric} for more details on the hypergeometric function. 
Both of these lower bounds, alongside the lower bound obtained via the classical van Trees inequality, are given in Figure~\ref{fig:1/(1+x)}. 

\begin{figure}
    \centering
    \includegraphics[width=0.75\linewidth]{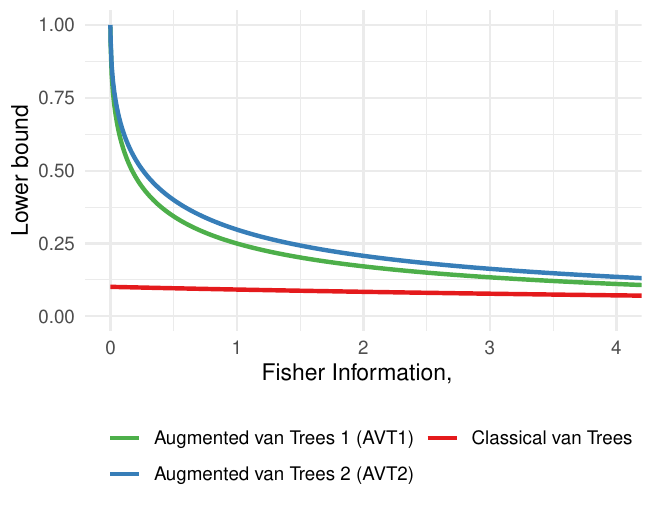}
    \caption{The lower bound for $\sup_{t\in T}\E_{P_t}\big[(\hat{t}(\boldsymbol{X})-t)^2\big]$ as given by the van Trees inequality (with optimal prior)~\eqref{eq:vTcurve} and the augmented van Trees inequality (with approximately optimal prior)~\eqref{eq:AVT}.}
    \label{fig:1/(1+x)}
\end{figure}

The remainder of the paper is organized as follows. In Section~\ref{sec:gvt} we state and prove the augmented van Trees inequality (Theorem~\ref{thm:gvt}). Section~\ref{sec:minimax} applies this result to H\"older function estimation, yielding sharper minimax lower bounds, including in some settings exact constants. In Section~\ref{sec:takatsu} we show how our augmentation mechanism may also be applied to the recently introduced generalized van Trees inequality~\citep{gvt}, outlining the flexibility and applicability of our augmentation scheme. 

\section{The Augmented van Trees inequality}\label{sec:gvt}

Consider the parametric model $(P_t)_{t\in T}$ in Section~\ref{sec:intro}. The augmented van Trees inequality below bounds the squared Bayes risk for any (Lebesgue) prior, and involves an additional auxiliary~\emph{augmentation function} $\alpha:T\to\R$.

\begin{theorem}[Augmented van Trees inequality]\label{thm:gvt}
    Assume that:
    \begin{enumerate}[label=(\roman*)]
        \item $p:\cX\times T\to [0,\infty)$ is a measurable function such that $p(\cdot,t)$ is a Lebesgue density for each $t\in T$, the function $t\mapsto p(x,t)$ is absolutely continuous for $\nu$-almost all $x$, and the Fisher information
        $$\cI(t):=\int_\cX\bigg(\frac{\partial_t p(x,t)}{p(x,t)}\bigg)^2p(x,t)\,d\nu(x),$$
        is finite and satisfies $\int_T\cI(t)\,dt<\infty$.
        \label{ass:DGM}
        \item The prior $\mu$ is a Lebesgue density on $T$ and is an absolutely continuous function.
        \item The augmentation function $\alpha:T\to\R$ is absolutely continuous with $\alpha(t_1)=\alpha(t_2)=0$, and
        \begin{gather*}
            \max\biggl\{\int_T \alpha,
            \;
            \int_T \frac{\alpha^2}{\mu},
            \;
            \int_T \frac{(\alpha')^2}{\mu}
            \biggr\}
            <\infty.
        \end{gather*}
    \end{enumerate}
    Then for any measurable function $\hat{t}:\cX\to\R$, the Bayes risk satisfies
    \begin{equation}\label{eq:gvt-thm}
        \int_T\E_{P_t}\big[(\hat{t}(\boldsymbol{X})-t)^2\big]
        \mu(t)\,dt
        \geq
        \frac{\big(\int_T\alpha\big)^2}{\int_T\frac{\cI \alpha^2+(\alpha')^2}{\mu}
        }.
    \end{equation}
\end{theorem}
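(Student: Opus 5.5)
We follow the standard Cauchy--Schwarz proof of the van Trees inequality, with the prior $\mu$ in the integration-by-parts identity replaced by the augmentation function $\alpha$. Because $\alpha$, not $\mu$, vanishes at $t_1$ and $t_2$, the prior no longer needs to vanish at the boundary. The key object is the function $\psi(x,t):=\partial_t\{p(x,t)\alpha(t)\}$ on $\cX\times T$, and the key identity is
\begin{equation*}
    \int_T\int_\cX (\hat{t}(x)-t)\,\partial_t\{p(x,t)\alpha(t)\}\,d\nu(x)\,dt=\int_T\alpha(t)\,dt .
\end{equation*}

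To prove the identity, we first integrate in $t$ for fixed $x$. For $\nu$-almost every $x$ the map $t\mapsto p(x,t)\alpha(t)$ is absolutely continuous, as a product of absolutely continuous functions on a compact interval. Integration by parts therefore gives
\begin{equation*}
    \int_T (\hat{t}(x)-t)\,\partial_t\{p\alpha\}\,dt=\bigl[(\hat t(x)-t)p(x,t)\alpha(t)\bigr]_{t_1}^{t_2}+\int_T p(x,t)\alpha(t)\,dt .
\end{equation*}
The boundary term vanishes because $\alpha(t_1)=\alpha(t_2)=0$. Integrating over $x$ and using $\int_\cX p(x,t)\,d\nu=1$ gives $\int_T\alpha$.

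Next we rewrite the integrand as $(\hat t-t)\sqrt{p\mu}\cdot \psi/\sqrt{p\mu}$ and apply Cauchy--Schwarz on $\cX\times T$ with respect to $\nu\otimes dt$:
\begin{equation*}
    \Bigl(\int_T\alpha\Bigr)^2\le \int_T\int_\cX(\hat t-t)^2p\,\mu\,d\nu\,dt\cdot\int_T\int_\cX\frac{\psi^2}{p\mu}\,d\nu\,dt .
\end{equation*}
We adopt the convention $1/0=0$. This is legitimate because $\psi=0$ almost everywhere where $p\mu=0$. Where $p=0$, the function $t\mapsto p(x,t)$ attains a minimum, so $\partial_t p=0$ almost everywhere there. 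Where $\mu=0$ we need $\alpha=\alpha'=0$ almost everywhere, which the finiteness of $\int\alpha^2/\mu$ and $\int(\alpha')^2/\mu$ enforces, up to null sets. Finally we expand $\psi=\alpha\,\partial_t p+p\,\alpha'$:
\begin{equation*}
    \int_\cX\frac{\psi^2}{p}\,d\nu=\alpha^2\cI(t)+2\alpha\alpha'\int_\cX\partial_t p\,d\nu+(\alpha')^2 .
\end{equation*}
If the cross term vanishes, dividing by $\mu$ and integrating over $T$ yields~\eqref{eq:gvt-thm}.

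The main obstacle is the measure-theoretic justification. First, the Fubini step and the integration by parts require $(\hat t-t)\psi$ to be jointly integrable. If the Bayes risk is infinite the bound is trivial, so we may assume it is finite. Joint integrability then follows from Cauchy--Schwarz, provided $\int\!\!\int\psi^2/(p\mu)<\infty$, which in turn follows from $\int\cI\alpha^2/\mu<\infty$ and $\int(\alpha')^2/\mu<\infty$. Second, we must show $\int_\cX\partial_t p(x,t)\,d\nu(x)=0$ for almost every $t$, i.e.\ that differentiation under the integral sign is allowed. We would try to get this from $\int_T\cI<\infty$. By Cauchy--Schwarz, $\int_\cX|\partial_tp|\,d\nu\le\sqrt{\cI(t)}$, so $\int_T\int_\cX|\partial_t p|\,d\nu\,dt<\infty$. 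Fubini then gives
\begin{equation*}
    \int_{t_1}^{s}\int_\cX\partial_tp\,d\nu\,dt=\int_\cX\bigl(p(x,s)-p(x,t_1)\bigr)\,d\nu=0
\end{equation*}
for all $s$, and hence the inner integral vanishes for almost every $t$. This is precisely the role of the hypothesis $\int_T\cI<\infty$. Once both points are verified, the cross term vanishes and the bound follows.
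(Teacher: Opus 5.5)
Your proposal is correct and follows the paper's route. You apply Cauchy--Schwarz against $\partial_t\{p\alpha\}/(p\mu)$ (the paper's $\zeta$), integrate by parts with the boundary term killed by $\alpha(t_1)=\alpha(t_2)=0$, and kill the cross term via $\int_\cX\partial_t p\,d\nu=0$; your Fubini argument for that last identity, which uses $\int_T\cI<\infty$, is more careful than the paper's appeal to dominated convergence from the pointwise bound $\int_\cX|\partial_tp|\,d\nu\le\sqrt{\cI(t)}$.

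One point needs care. Your step ``finiteness of $\int_T\alpha^2/\mu$ forces $\alpha=0$ a.e.\ on $\{\mu=0\}$'' holds only if that hypothesis is read with $c/0=+\infty$ for $c>0$, not with the $1/0=0$ convention you (and the paper) adopt. Under $1/0=0$ the statement fails, e.g.\ for $\mu$ supported on a short subinterval with $\alpha$ small there but large off it, and the paper's proof tacitly relies on the same $c/0=+\infty$ reading.
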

Taking $\alpha=\mu$ recovers the classical van Trees inequality~\eqref{eq:vT}. The incorporation of the auxiliary augmentation function $\alpha$ allows for tighter lower bounds, obtained for example by optimizing~\eqref{eq:gvt-thm} over a suitable class of augmentation functions $\alpha$. 
In contrast to the classical van Trees inequality and its generalizations~\citep{vantrees, gassiat, gvt} we do not require the prior density $\mu$ to vanish at the boundary of $T$, with that slack taken up instead by the augmentation function $\alpha$. 
In fact, this allows us to obtain a lower bound on the squared Bayes risk for any absolutely continuous prior density. 
This additional flexibility allows us to concentrate a more substantial proportion of the prior's mass near the boundary of 
$T$, which typically corresponds to the parameter values that are most difficult to distinguish between for a given statistic, improving the tightness of the bound \eqref{eq:sup-leq-prior} (see Figure~\ref{fig:batman} later for an example). Consequently, applying the augmented van Trees inequality of Theorem~\ref{thm:gvt} together with \eqref{eq:sup-leq-prior}, and optimizing over valid priors, yields the following lower bound on the worst-case error.
\begin{theorem}\label{prop:gvt-simple}
    Consider the class of distributions $(P_t)_{t\in T}$ satisfying Assumption~\ref{ass:DGM} of Theorem~\ref{thm:gvt}, and suppose $\cI$ is absolutely continuous on $T$. 
    Let $\mathcal{A}$ be any class of augmentation functions $\alpha:T\to\R$ that are:~(i) absolutely continuous on $T$ with weak derivative $\alpha'$ such that $(\alpha')^2$ is absolutely continuous on $T$; and~(ii) satisfy $\alpha(t_1)=\alpha(t_2)=0$. 
    Then 
    \begin{equation}\label{eq:stronger-bound}
        \sup_{t\in T}\E_{P_t}\big[(\hat{t}(\boldsymbol{X})-t)^2\big]
        \geq 
        \sup_{\alpha\in\mathcal{A}}\Biggl(\frac{\int_T\alpha }{\int_T\sqrt{\cI \alpha^2+(\alpha')^2}}\Biggr)^2.
    \end{equation}
\end{theorem}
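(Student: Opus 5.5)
Fix $\alpha\in\mathcal{A}$ and an estimator $\hat t$. The plan is to combine \eqref{eq:sup-leq-prior} with Theorem~\ref{thm:gvt} for a prior $\mu$ built from $\alpha$, and then pick $\mu$ to minimise the denominator in \eqref{eq:gvt-thm}. Write $g:=\sqrt{\cI\alpha^2+(\alpha')^2}$. For any prior $\mu$, Cauchy--Schwarz gives
\begin{equation*}
\Bigl(\int_T g\Bigr)^2\le \int_T\frac{g^2}{\mu}\int_T\mu=\int_T\frac{\cI\alpha^2+(\alpha')^2}{\mu},
\end{equation*}
with equality when $\mu\propto g$. So the natural choice is $\mu:=g/\int_T g$. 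This gives denominator $(\int_T g)^2$ in \eqref{eq:gvt-thm}, and hence the Bayes risk is at least $(\int_T\alpha/\int_T g)^2$. Taking the supremum over $t$ via \eqref{eq:sup-leq-prior} and then the supremum over $\alpha\in\mathcal{A}$ yields \eqref{eq:stronger-bound}.

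We may assume $\int_T g\in(0,\infty)$. If $\int_T g=0$ then $\alpha'=0$ a.e., so $\alpha\equiv0$ and the ratio is read as $0$, making the bound trivial. Finiteness of $\int_T g$ follows from $g\le\sqrt{\cI}|\alpha|+|\alpha'|$, together with continuity (hence boundedness) of $\alpha$ and $\cI$ on the compact $T$ and integrability of $\alpha'$.

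Next, verify the hypotheses of Theorem~\ref{thm:gvt} for this $\mu$.
\begin{itemize}
\item \emph{$\mu$ is absolutely continuous.} Since $\cI$, $\alpha^2$ and $(\alpha')^2$ are absolutely continuous, $h:=\cI\alpha^2+(\alpha')^2$ is absolutely continuous and nonnegative. The catch is that $\sqrt{h}$ need not be absolutely continuous where $h$ vanishes.
\item \emph{The integrability conditions.} We have $\int_T\alpha^2/\mu\le\int_T g\cdot\int_T\alpha^2/(\sqrt{\cI}|\alpha|)$, and similarly $\int_T(\alpha')^2/\mu\le\int_T g\int_T|\alpha'|<\infty$. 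The first bound is problematic where $\cI$ is small, which is why I would regularise.
\end{itemize}

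The main obstacle is exactly this regularity. I would handle it by applying Theorem~\ref{thm:gvt} with $\mu_\varepsilon\propto\sqrt{h+\varepsilon^2}$ for $\varepsilon>0$.
\begin{itemize}
\item Since $h+\varepsilon^2\ge\varepsilon^2>0$ and $x\mapsto\sqrt{x}$ is Lipschitz on $[\varepsilon^2,\infty)$, $\mu_\varepsilon$ is absolutely continuous and bounded away from zero.
\item Being bounded away from zero, $\mu_\varepsilon$ makes all the integrability conditions in (iii) immediate, given $\alpha,\alpha'\in L^2$. Note $\alpha'\in L^2$ because $(\alpha')^2$ is absolutely continuous, hence bounded.
\item The Bayes risk under $\mu_\varepsilon$ is at least $(\int\alpha)^2/\bigl(\int\sqrt{h+\varepsilon^2}\cdot\int h/\sqrt{h+\varepsilon^2}\bigr)$, and this is at least $(\int\alpha)^2/(\int\sqrt{h+\varepsilon^2})^2$ because $h/\sqrt{h+\varepsilon^2}\le\sqrt{h+\varepsilon^2}$.
\end{itemize}
Combining with \eqref{eq:sup-leq-prior} and letting $\varepsilon\downarrow0$ by monotone convergence, $\int_T\sqrt{h+\varepsilon^2}\to\int_T g$, which yields the claim.
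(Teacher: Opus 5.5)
Your proof is correct and follows the paper's route: Cauchy--Schwarz identifies the optimal prior $\mu\propto\sqrt{\cI\alpha^2+(\alpha')^2}$, which is then fed into Theorem~\ref{thm:gvt} and combined with \eqref{eq:sup-leq-prior}. The one difference is that the paper plugs in $\mu^*_\alpha$ directly, asserts without justification that it is absolutely continuous, and never checks $\int_T\alpha^2/\mu^*_\alpha<\infty$; your regularisation $\mu_\varepsilon\propto\sqrt{h+\varepsilon^2}$ handles both points ($\sqrt{h}$ can fail to be absolutely continuous where $h$ vanishes, and $\alpha^2/\mu^*_\alpha$ can blow up where $\cI$ is small), so your version is the more rigorous of the two.
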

Theorem~\ref{prop:gvt-simple} is a simple consequence of the Cauchy--Schwarz inequality; 
for a given $\alpha\in\mathcal{A}$ the prior that maximizes the lower bound~\eqref{eq:gvt-thm} is
\begin{equation}\label{eq:mu-opt-g}
    \mu(t)\propto \sqrt{\cI(t)\alpha^2(t)+\alpha'(t)^2}.
\end{equation}
Note therefore taking any class of augmentation functions $\cA$ that contains the function $\alpha(t)=\cos^2(\pi\hspace{0.04em}t/2)$ guarantees that the corresponding lower bound~\eqref{eq:stronger-bound} dominates the classical van Trees inequality. 
Whilst the variational problem corresponding to the supremum in \eqref{eq:stronger-bound} over all suitable augmentation functions yields a rather involved Euler--Lagrange equation, we may still obtain reasonable bounds by selecting a sensible ansatz class of augmentation functions, such as the two cases below.
\begin{example}[Augmented van Trees 1]\label{ex:aVT1}
    Suppose $\cI(\cdot)=\cI$ is constant and $T=[-1,1]$. Taking the class of functions $\cA=\bigl\{\alpha(t)=\ind_{\{|t|\leq 1-\delta\}}+\bigl(\frac{1-t}{\delta}\bigr)\ind_{\{1-\delta<|t|\leq 1\}}\,:\,\delta\in(0,1)\bigr\}$. 
    Then $\int_T\alpha\geq (1-\delta)$ and $\int_T\sqrt{\cI\alpha^2+(\alpha')^2}\leq(1-\delta)\sqrt{\cI}+\sqrt{\delta^2\cI+1}\leq \sqrt{\cI}+1$. Taking $\delta\searrow0$ we obtain
    \begin{equation*}
        \sup_{t\in [-1,1]}\E_{P_t}\big[(\hat{t}(\boldsymbol{X})-t)^2\big]
        \geq
        \frac{1}{(\sqrt{\cI}+1)^2}.
    \end{equation*}
\end{example}

\begin{example}[Augmented van Trees 2]\label{ex:hypergeo}
    Suppose again $\cI(\cdot)=\cI$ is constant and $T=[-1,1]$. Consider the class of augmentation functions $\mathcal{A}=\big\{\alpha(t)=(1-|t|)^m:m>0\big\}$. Then (see Appendix~\ref{appsec:hypergeo}) the inequality~\eqref{eq:stronger-bound} becomes
    \begin{equation*}
        \sup_{t\in [-1,1]}\E_{P_t}\big[(\hat{t}(\boldsymbol{X})-t)^2\big]
        \geq 
        \frac{1}{\inf_{m>0}(m+1)^2\bigl\{\hspace{0cm}_2F_1\bigl(-\tfrac{1}{2},\tfrac{m}{2},\tfrac{m}{2}+1,-\tfrac{\cI}{m^2}\bigr)\bigr\}^2}.
    \end{equation*}
\end{example}
The two lower bounds obtained above are plotted in Figure~\ref{fig:1/(1+x)}.

\subsection{Extensions to other loss functions} 
The van Trees inequality is typically restricted to the squared error risk. We extend the (augmented) van Trees inequality to general $L_p$ loss functions below. 
\begin{theorem}\label{thm:genrealized-loss}
Adopt the setup of Theorems~\ref{thm:gvt}~and~\ref{prop:gvt-simple}. Suppose $p,q>1$ satisfies $1/p+1/q=1$. Also suppose the conditional score function $\rho_t(x) := \frac{\partial_tp(x,t)}{p(x,t)}$ is such that $\E_{P_t}[|\rho_t({\boldsymbol{X}})|^q]$ is finite and integrable. Then
    \begin{equation*}
        \sup_{t\in T}\E_{P_t}\bigl[|\hat{t}(\boldsymbol{X})-t|^p\bigr]
        \geq
        \sup_{\alpha\in\cA}\Biggl(\frac{\bigl|\int_T\alpha(t)\,dt\bigr|}{\int_T
        \bigl\{\E_{P_t}\bigl[|\alpha'(t)+\alpha(t)\rho_t({\boldsymbol{X}})|^q\bigr]\bigr\}^{1/q}
        \,dt}\Biggr)^p,
    \end{equation*}
    where the supremum is take over all augmentation functions $\cA$ as in Theorem~\ref{prop:gvt-simple}.
\end{theorem}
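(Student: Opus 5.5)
We would prove this in the same way as Theorem~\ref{thm:gvt}, replacing Cauchy--Schwarz by H\"older's inequality, and then choose the prior as in Theorem~\ref{prop:gvt-simple}.

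\textbf{Step 1 (integration by parts).} Fix $\alpha\in\cA$ and an arbitrary prior density $\mu$ on $T$. Define $\psi(x,t):=\partial_t\{\alpha(t)p(x,t)\}=\{\alpha'(t)+\alpha(t)\rho_t(x)\}p(x,t)$. Since $\alpha(t_1)=\alpha(t_2)=0$ and $t\mapsto\alpha(t)p(x,t)$ is absolutely continuous, we have $\int_T\psi(x,t)\,dt=0$ for $\nu$-a.e.\ $x$. Integrating by parts in $t$ then gives
\begin{equation*}
\int_T\int_\cX(\hat t(x)-t)\,\psi(x,t)\,d\nu(x)\,dt=\int_T\int_\cX\alpha(t)p(x,t)\,d\nu(x)\,dt=\int_T\alpha .
\end{equation*}
Here the $\hat t(x)$ term vanishes, and $-t$ contributes $\int\alpha p$. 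Fubini is justified by the integrability assumptions, namely $\E_{P_t}|\rho_t|^q$ integrable and $\alpha,\alpha'$ bounded on the compact $T$. If the risk is infinite there is nothing to prove, so we may also assume $\E_{P_t}|\hat t-t|^p<\infty$.

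\textbf{Step 2 (H\"older).} Write $(\hat t-t)\psi=\bigl[(\hat t-t)(p\mu)^{1/p}\bigr]\cdot\bigl[(\alpha'+\alpha\rho_t)p^{1/q}\mu^{-1/p}\bigr]$. Applying H\"older with exponents $p,q$ on $\cX\times T$ yields
\begin{equation*}
\Bigl|\int_T\alpha\Bigr|\le\Bigl(\int_T\E_{P_t}|\hat t-t|^p\mu\,dt\Bigr)^{1/p}\Bigl(\int_T\frac{\E_{P_t}|\alpha'+\alpha\rho_t|^q}{\mu^{q/p}}\,dt\Bigr)^{1/q}.
\end{equation*}
This is the $L_p$ analogue of \eqref{eq:gvt-thm}. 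Bounding the Bayes risk by the supremum risk, as in \eqref{eq:sup-leq-prior}, we get
\begin{equation*}
\sup_t\E_{P_t}|\hat t-t|^p\ge |\textstyle\int\alpha|^p\Big/\Bigl(\int_T g^q\mu^{1-q}\Bigr)^{p/q},\qquad g(t):=\{\E_{P_t}|\alpha'+\alpha\rho_t|^q\}^{1/q},
\end{equation*}
using $q/p=q-1$.

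\textbf{Step 3 (optimizing the prior).} Take $\mu=g/\int_T g$. This is the analogue of \eqref{eq:mu-opt-g}, and it is the minimizer of $\int g^q\mu^{1-q}$ by H\"older/Jensen, since $\int g=\int (g\mu^{-1/p})\mu^{1/p}\le(\int g^q\mu^{1-q})^{1/q}$. With this choice, $\int g^q\mu^{1-q}=(\int g)^q$, and the bound becomes $\bigl(|\int\alpha|/\int_T g\bigr)^p$. Taking the supremum over $\alpha\in\cA$ completes the argument.

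\textbf{Main obstacle: admissibility of the optimal prior.} The hard part is that the optimal prior must be an absolutely continuous density and cannot make $\int g^q\mu^{1-q}$ infinite where $g$ vanishes. Our convention $1/0=0$ handles points with $g=0$, since the integrand is then $0$. For absolute continuity of $g$, we would instead use the mixture $\mu_\epsilon=(1-\epsilon)\tilde g+\epsilon\cdot$uniform, with $\tilde g$ a smooth mollification of $g/\int g$. We would then let $\epsilon\to0$ and the mollification parameter tend to zero, invoking dominated or monotone convergence. This mirrors how Theorem~\ref{prop:gvt-simple} is derived from Theorem~\ref{thm:gvt}, where the assumed absolute continuity of $\cI$ and $(\alpha')^2$ is what makes $\mu\propto g$ directly admissible. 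Here, analogous regularity of $t\mapsto\E_{P_t}|\alpha'+\alpha\rho_t|^q$ would need to be checked or circumvented by this approximation.
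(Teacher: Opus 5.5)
Your proposal is correct and follows essentially the paper's proof. It uses the same integration-by-parts identity (the paper's $\zeta=\partial_t(p\alpha)/(p\mu)$ is your $\psi/(p\mu)$), H\"older in place of Cauchy--Schwarz, and the same choice $\mu\propto\{\E_{P_t}|\alpha'+\alpha\rho_t|^q\}^{1/q}$.

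Your concern about whether this prior is admissible goes beyond the paper, which simply plugs it in. The mollification is unnecessary, though. $\mu$ only ever appears as the weight in $p\mu\zeta=\partial_t(\alpha p)$ and is never differentiated, so any probability density that is positive wherever $g>0$ works.
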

Theorem~\ref{prop:gvt-simple} is recovered by taking $p=q=2$, where
$$\E_{P_t}\bigl[\{\alpha'(t)+\alpha(t)\rho_t(\boldsymbol{X})\}^2\bigr]=\alpha'(t)^2+\alpha^2(t)\E_{P_t}[\rho_t^2(\boldsymbol{X})]=\alpha'(t)^2+\alpha^2(t)\cI(t).$$

\section{Applications to minimax function estimation}\label{sec:minimax}

We apply the augmented van Trees inequality to the problem of pointwise estimation of a function in the H\"older class with exponent $\beta\in(0,2]$ and constant $L>0$,
\begin{equation*}
    \cH(\beta,L)
    :=
    \begin{cases}
        \bigl\{
        f:\R^d\to\R,
        \;\;
        |f(x)-f(x_0)|\leq L\|x-x_0\|_2^\beta
        \bigr\}
        &\;\,\text{if $\beta\in(0,1]$},
        \\
        \bigl\{
        f:\R^d\to\R \mathrm{\;\; once\;differentiable},
        \;\;
        \|\nabla f(x)-\nabla f(x_0)\|_2 \leq L\|x-x_0\|_2^{\beta-1}
        \bigr\}
        &\;\,\text{if $\beta\in(1,2]$},
    \end{cases}
\end{equation*}
for a general dimension $d\in\N$. 
Suppose we have access to $n$ i.i.d.~pairs $(X_1,Y_1),\ldots,(X_n,Y_n)\in\R^d\times\R$ with
\begin{equation}\label{eq:Y=f(X)+e}
    Y_i=f(X_i)+\varepsilon_i,
\end{equation}
where $\varepsilon_i\given X_i\iid N(0,\sigma^2)$, regression function $f\in\cH(\beta,L)$, and $X_i\sim P_X$ where $P_X$ is absolutely continuous with respect to Lebesgue measure with Radon-Nikodym derivative that is absolutely continuous.   
We will construct a lower bound on the minimax squared risk on $(\cH(\beta,L),\delta)$ for the semi-norm $\delta(f_1,f_2)=|f_1(x_0)-f_2(x_0)|$ for some $x_0\in\R^d$, i.e.
\begin{equation*}
    \inf_{\hat{f}}\sup_{f\in\cH(\beta,L)}\E_f\Bigl[\bigl(\hat{f}(x_0)-f(x_0)\bigr)^2\Bigr],
\end{equation*}
where the infimum is taken over all Borel-measurable functions of the data. 
The augmented van Trees inequality (in the AVT2 form~\eqref{eq:AVT}) may be used to obtain the following near-sharp characterisation on the minimax risk. 

\begin{theorem}[Pointwise H\"older function estimation]\label{thm:all-minimax}
    Consider the model~\eqref{eq:Y=f(X)+e}. For any $\beta\in(0,2]$, $d\in\N$, $L>0$, $\sigma^2>0$, $x_0\in\R^d$,
    \begin{multline*}
        \frac{1}{1.69}\leq 
        \liminf_{n\to\infty}\frac{\inf\limits_{\hat{f}}\sup\limits_{f\in\cH(\beta,L)}\E_f\Bigl[\bigl(\hat{f}(x_0)-f(x_0)\bigr)^2\Bigr]}{\bigl(\frac{d^d (\beta+d)^{2\beta}\Gamma^{2\beta}(1+d/2)}{\pi^{\beta d}\beta^{2\beta}(2\beta+d)^{d}(1\vee\beta)^{2d}}\bigr)^{1/(2\beta+d)}\bigl(\frac{L^{d/\beta}\sigma^2}{p_X(x_0)n}\bigr)^{2\beta/(2\beta+d)}}
        \\
        \leq
        \limsup_{n\to\infty}\frac{\inf\limits_{\hat{f}}\sup\limits_{f\in\cH(\beta,L)}\E_f\Bigl[\bigl(\hat{f}(x_0)-f(x_0)\bigr)^2\Bigr]}{\bigl(\frac{d^d (\beta+d)^{2\beta}\Gamma^{2\beta}(1+d/2)}{\pi^{\beta d}\beta^{2\beta}(2\beta+d)^{d}(1\vee\beta)^{2d}}\bigr)^{1/(2\beta+d)}\bigl(\frac{L^{d/\beta}\sigma^2}{p_X(x_0)n}\bigr)^{2\beta/(2\beta+d)}}
        \leq 
        1.
    \end{multline*}
\end{theorem}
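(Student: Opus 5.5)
Two things have to be shown: an upper bound, from an explicit local kernel estimator whose leading constant equals the normalising quantity, and a lower bound within the factor $1.69$, from a one-parameter submodel together with the AVT2 bound~\eqref{eq:AVT}.

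\emph{Upper bound.} I will use a local averaging estimator $\hat f(x_0)=\sum_i w_i Y_i$. For $\beta\le 1$ it averages the $Y_i$ over a ball around $x_0$ with an optimised radial weight; for $\beta\in(1,2]$ it is a kernel estimator with a symmetric kernel, so that the first-order Taylor term cancels. The worst-case bias over $\cH(\beta,L)$ is $L\int K(u)\|u\|^\beta du\, h^\beta$. The variance is $\sigma^2\int K^2/(np_X(x_0)h^d)$, using continuity of $p_X$ at $x_0$. The next step is to minimise bias$^2$+variance jointly over $h$ and over radial $K$. This is a variational problem: minimise $(\int K\|u\|^\beta)^{2\cdot\frac{d}{2\beta+d}}(\int K^2)^{\frac{2\beta}{2\beta+d}}$ subject to $\int K=1$. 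I expect its solution to be of Epanechnikov type, $K(u)\propto(1-\|u\|^\beta)_+$. Evaluating the integrals with polar coordinates, where the volume of the unit ball is $\pi^{d/2}/\Gamma(1+d/2)$, should reproduce exactly the constant in the display. The factor $(1\vee\beta)^{2d}$ should arise because, for $\beta>1$, the Hölder condition is imposed on the gradient, which effectively changes $L$ to $L/\beta$.

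\emph{Lower bound.} I will take the submodel $f_t(x)=t\,\gamma_n(x)$ for $t\in[-1,1]$, where $\gamma_n(x)=h^\beta g((x-x_0)/h)$ with $g(u)=(1-\|u\|^\beta)_+$ (suitably rescaled when $\beta>1$) and $g(0)=1$, so that $f_t\in\cH(\beta,L)$. Estimating $f(x_0)=t h^\beta$ then reduces to estimating $t$, with constant Fisher information $\cI=n\int\gamma_n^2p_X/\sigma^2\approx n h^{2\beta+d}p_X(x_0)\int g^2/\sigma^2$. Applying~\eqref{eq:AVT} gives the bound $\sup_t\E(\hat f(x_0)-f(x_0))^2\ge h^{2\beta}R(\cI)$, where $R$ denotes the AVT2 right-hand side. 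Choosing $h$ so that $\cI=c$ is fixed gives $h^{2\beta}\propto(\sigma^2c/(np_X(x_0)\int g^2))^{2\beta/(2\beta+d)}$, which is the correct rate. Dividing by the normalising constant leaves a function of $c$ alone, namely $\propto c^{2\beta/(2\beta+d)}R(c)$ times a constant comparing $\int g^2$ with the upper-bound kernel constants. Because the perturbation $g$ is the same profile as the optimal kernel, these constants should cancel. The lower-bound ratio then becomes $\sup_c c^{2\beta/(2\beta+d)}R(c)\cdot\kappa_{\beta,d}$, where $\kappa_{\beta,d}$ is the leftover constant relative to the exact value that AVT would give if it matched $1/(\sqrt{c}+\dots)^2$ behaviour.

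\emph{Main obstacle.} The hard step is showing that this supremum over $c$ (and over $m$ inside $R$) is at least $1/1.69$ uniformly in $\beta\in(0,2]$ and $d\in\N$. The exponent $2\beta/(2\beta+d)$ ranges over $(0,4/5]$, so the worst case should be the extreme exponent $4/5$ ($\beta=2$, $d=1$), and I will first prove monotonicity of the bound in the exponent $r=2\beta/(2\beta+d)$. For the worst case I will verify the constant numerically or with an explicit choice of $m$ and $c$, lower-bounding the hypergeometric function by elementary estimates. For small exponents the AVT1 form $1/(\sqrt c+1)^2$ with $c\to\infty$ already suffices; in the high-dimensional limit this is what yields exactness. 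Uniformity in $n$ should follow because the Fisher information approximation is $\cI=c(1+o(1))$ and $R$ is continuous.
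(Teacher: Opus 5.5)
\textbf{Main gap: the uniformity step.} Your reduction is the paper's. It uses the same bump submodel $f_t=\frac{tLh^\beta}{1\vee\beta}(1-\|(x-x_0)/h\|_2^\beta)_+$ with AVT2, and a kernel estimator with $K\propto(1-\|u\|_2^\beta)_+$. No variational argument is needed; it is enough to exhibit this kernel. The kernel constants do cancel, and the leftover is exactly $\kappa_{\beta,d}=1/\{a^a(1-a)^{1-a}\}$ with $a=2\beta/(2\beta+d)$, which comes from optimizing $h$ in the upper bound. So everything reduces to showing $\inf_{a\in(0,4/5]}A(a)\ge 1/1.69$, where
$A(a)=\frac{1}{a^a(1-a)^{1-a}}\sup_{\lambda,m>0}\lambda^a\big/\bigl\{(m+1)\,{}_2F_1(-\tfrac12,\tfrac m2,\tfrac m2+1;-\tfrac{\lambda}{m^2})\bigr\}^2$.

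Your plan for this step fails: $A$ is not monotone, and $a=4/5$ is not the worst case.
\begin{itemize}
\item Taking $\sqrt\lambda=a/(1-a)$ in AVT1 gives $A(a)\ge a^a(1-a)^{1-a}$. This already exceeds $1/1.69$ for $a\le 0.2$ and for $a\ge 0.79$, and it tends to $1$ as $a\to0$.
\item At $a=4/5$, AVT1 alone gives $0.606$ and AVT2 gives about $1/1.37$. This is why the paper quotes the separate, better constant $1.37$ for $(\beta,d)=(2,1)$.
\item In the middle, AVT1 drops to $1/2$ at $a=1/2$ (e.g.\ $(\beta,d)=(1,2)$). Even optimized AVT2, with $m$ around $0.15$--$0.3$, gives only roughly $0.6$ for $a$ near $0.4$--$0.5$.
\end{itemize}
This interior dip is what determines $1.69$. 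A monotonicity lemma is therefore false, and checking only $a=4/5$ tells you nothing about the binding range.

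You have to control $A$ over the whole interval, which the paper does numerically. One way to make that rigorous: for fixed $(\lambda,m)$, the logarithm of the objective, $a\log\lambda-2\log\{(m+1)\,{}_2F_1\}-a\log a-(1-a)\log(1-a)$, is concave in $a$. So on each cell of a finite grid, a single $(\lambda,m)$ checked at the two endpoints bounds $A$ from below on the whole cell. The ranges $a\le0.2$ and $a\ge0.79$ can be handled by AVT1.

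A related slip: for small exponents AVT1 works with $c=(a/(1-a))^2\to0$, not with $c\to\infty$. Since $a<1$, $c^a/(\sqrt c+1)^2\to0$ as $c\to\infty$.

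\textbf{Secondary issue in the upper bound.} The paper's Nadaraya--Watson computation passes over the same point. For $\beta\in(1,2]$, a symmetric kernel does not make the first-order Taylor term vanish, because $\sum_i w_i(X_i-x_0)\neq0$ under a random or non-uniform design. Moreover, $\cH(\beta,L)$ is invariant under adding any linear function, so $\nabla f(x_0)$ is unbounded over the class. Hence the worst-case risk of a plain kernel average is actually infinite. Use local linear weights instead. They reproduce linear functions exactly, and at an interior point with symmetric $K$ their equivalent kernel is $K$ up to a factor $1+o(1)$. This removes the problem without changing the constant.
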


Fixed design, finite sample, and locally asymptotic minimax bounds can similarly be obtained with this machinery. 
The universal constant of $1.69$ in~Theorem~\ref{thm:all-minimax} allows the above result to hold for every $\beta\in(0,2], d\in\N$ but can be improved in individual cases. For example when $(\beta,d)=(2,1)$, corresponding to a univariate differentiable regression function with Lipschitz derivative, we obtain
    \begin{equation*}
        \frac{1}{1.37}\leq 
        \liminf_{n\to\infty}\frac{\inf\limits_{\hat{f}}\sup\limits_{f\in\cH(2,L)}\E_f\Bigl[\bigl(\hat{f}(x_0)-f(x_0)\bigr)^2\Bigr]}
        {\frac{3^{4/5}}{2^2\cdot 5^{1/5}} \cdot \bigl(\frac{L^{1/2}\sigma^2}{p_X(x_0)n}\bigr)^{4/5}}
        \leq
        \limsup_{n\to\infty}\frac{\inf\limits_{\hat{f}}\sup\limits_{f\in\cH(2,L)}\E_f\Bigl[\bigl(\hat{f}(x_0)-f(x_0)\bigr)^2\Bigr]}
        {\frac{3^{4/5}}{2^2\cdot 5^{1/5}} \cdot \bigl(\frac{L^{1/2}\sigma^2}{p_X(x_0)n}\bigr)^{4/5}}
        \leq 
        1.
    \end{equation*}
To obtain a semblance of why the augmented van Trees improves over classical van Trees, note that the tightness in bounding the worst-case risk by the Bayes risk depends on how much of the prior's mass is concentrated around the hardest to distinguish points along a subpath $(P_t)_{t\in [-1,1]}$; typically at the two extremal points $\pm1$. The augmented van Trees inequality, allowing for $\mu(\pm1)$ to be arbitrary, allows for more of the prior's mass to be concentrated in neighbourhoods of $\pm1$ compared to the classical van Trees inequality which requires $\mu(\pm1)=0$; see Figure~\ref{fig:batman} for an example. 

\begin{figure}[ht]
    \centering
    \includegraphics[width=0.9\linewidth]{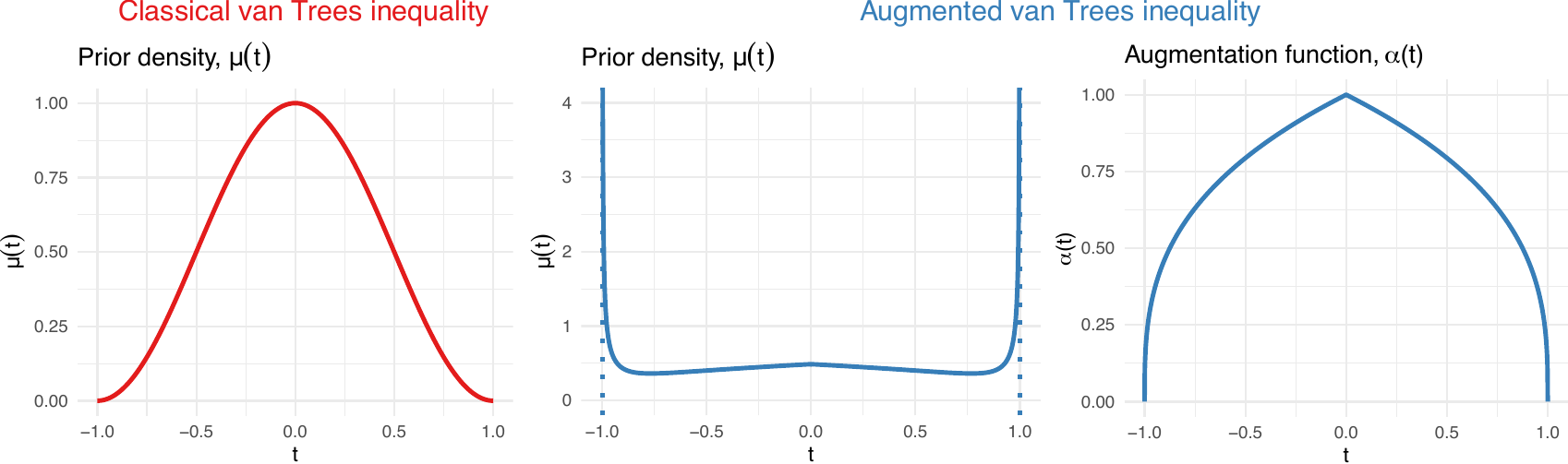}
    \caption{The `optimal' prior densities via the classical van Trees and the augmented van Trees inequalities 
    in the setting of minimax H\"older function estimation in Section~\ref{sec:minimax} with $(\beta,d)=(2,1)$.  
    For the classical van Trees inequality the optimal prior is $\mu(t)=\cos^2({\pi t}/{2})$, with no augmentation. For the augmented van Trees inequality (AVT2) we obtain a lower bound by taking 
    augmentation function $\alpha(t)=(1-|t|)^m$ for $m>0$ 
    and the corresponding optimal prior~\eqref{eq:mu-opt-g}.
    }
    \label{fig:batman}
\end{figure}

Additionally, in the high-dimensional regime $d\to\infty$ we may obtain the exact asymptotic minimax risk for pointwise H\"older function estimation for any $\beta\in(0,2]$, $L>0$, $\sigma>0$, and (smooth) covariate design $p_X$. In the following we denote $\mathcal{B}_0(1)$ to denote the Euclidean ball about $0$ of radius $1$.
\begin{theorem}[Exact risk in high-dimensional regime]\label{thm:asymptotics}
        Suppose $X$ is absolutely continuous with respect to Lebesgue measure, with Radon-Nikodym derivate $p_X$. Further, let $p_X(\cdot)/p_X(x_0)\in\cH(\alpha,L_X)$ for some $\alpha\in(0,1]$, $L_X>0$. 
        Then for any $x_0\in\R^d$ for which $\frac{1}{C}\leq\frac{p_X(x_0)}{1/\mathrm{vol}\mathcal{B}_1(0)}\leq C$ 
        for a constant $C\geq1$ that depends on $x_0$ only, and for any
        $\beta\in(0,2]$, $\sigma^2>0$, 
    \begin{equation}\label{eq:dinfty}
        \lim_{\substack{n,\hspace{0.05em}d\to\infty\\(\log n)/d\to\infty}
        }\frac{\inf_{\hat{f}}\sup_{f\in\cH(\beta,L)}\E_f\Bigl[\bigl(\hat{f}(x_0)-f(x_0)\bigr)^2\Bigr]}{\frac{1}{(2\pi e)^{\beta}(1\vee\beta)^2}d^{\beta}\bigl(\frac{\sigma^2 L^{d/\beta}}{p_X\hspace{-0.1em}(x_0)n}\bigr)^{2\beta/(2\beta+d)}}
        =
        1
    \end{equation}
\end{theorem}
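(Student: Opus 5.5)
The argument has two halves: a lower bound from the augmented van Trees inequality along a one-parameter bump subpath, and a matching upper bound from a local averaging estimator. Both are evaluated in the regime $d\to\infty$, $(\log n)/d\to\infty$. In that regime the exponent $2\beta/(2\beta+d)\to0$ while the Fisher information along the optimal subpath diverges. The constant $d^\beta/((2\pi e)^\beta(1\vee\beta)^2)$ should then come from Stirling applied to the volume of the unit ball, $\mathrm{vol}\,\mathcal{B}_0(1)=\pi^{d/2}/\Gamma(1+d/2)$. Indeed, the constant in Theorem~\ref{thm:all-minimax} behaves like
\[
\Bigl(\Gamma^{2\beta}(1+d/2)\pi^{-\beta d}\Bigr)^{1/(2\beta+d)}\to \frac{d}{2\pi e},
\]
raised to the appropriate power, with the other factors $\to 1/(1\vee\beta)^2$. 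A key step is to redo the bounds of Theorem~\ref{thm:all-minimax} while tracking every constant as a function of $d$, rather than quoting the universal factor $1.69$. That factor must disappear, so the lower bound cannot lose any constant.

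\textbf{Lower bound.} I would take the subpath $f_t(x)=t\,h\,\phi((x-x_0)/r)$, $t\in[-1,1]$. Here $\phi$ is the extremal H\"older profile: $\phi(u)=(1-\|u\|^\beta)_+$ for $\beta\le1$, and for $\beta\in(1,2]$ a radial profile attaining $L$ with constant $1/(1\vee\beta)$, e.g.\ $(1-\|u\|^\beta/\beta)$ suitably truncated. The height is $h=Lr^\beta$, or $Lr^\beta/\beta$ for $\beta\in(1,2]$. The Fisher information is constant:
\[
\cI = n h^2\sigma^{-2}\int p_X(x)\phi^2((x-x_0)/r)\,dx = n h^2 r^d p_X(x_0)\,\sigma^{-2}\int\phi^2\,(1+o(1)),
\]
where the $o(1)$ uses $p_X/p_X(x_0)\in\cH(\alpha,L_X)$ and $r\to0$. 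The risk on this subpath is $h^2$ times the risk for estimating $t$. I would apply Example~\ref{ex:aVT1}, giving the bound $1/(\sqrt{\cI}+1)^2$, and then optimise $r$, equivalently $\cI$, as $d\to\infty$. The crucial observation is that in high dimension $\int\phi^2$ over the unit ball concentrates near the boundary sphere, where $\phi$ is small. So the optimal choice lets $\cI\to\infty$ slowly: the factor $(\sqrt{\cI}+1)^2/\cI\to1$ while the rate loss $\cI^{-2\beta/(2\beta+d)}$ stays $1+o(1)$. Both are achievable precisely because $(\log n)/d\to\infty$, which allows $\cI$ to grow subpolynomially in $n$ but faster than any power of order $d$. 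Hence the augmented bound is asymptotically exact, unlike classical van Trees, which loses $\pi^2$.

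\textbf{Upper bound.} I would use a kernel or local-average estimator with uniform kernel on a ball of radius $r$. Its bias is at most $L r^\beta\,c_{\beta,d}$, where $c_{\beta,d}$ is the mean of $\|U\|^\beta$ under the uniform distribution on the ball ($\to1$ as $d\to\infty$), and for $\beta>1$ there is a factor $1/\beta$ from the Taylor remainder with symmetric cancellation of the gradient term. Its variance is $\sigma^2/(n p_X(x_0)\mathrm{vol}(\mathcal{B}_0(1))r^d)$. The main obstacle is the balancing. I expect the optimal trade-off to place almost all error in the bias, with $r^d\,\mathrm{vol}$ chosen so that the variance is negligible, again exploiting $(\log n)/d\to\infty$. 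This yields $L^2r^{2\beta}/(1\vee\beta)^2$ with $r=(\mathrm{vol}\,\mathcal{B}_0(1)\,n\,p_X(x_0)/\sigma^2)^{-1/d}$ up to factors tending to $1$. Substituting and applying Stirling should give exactly $d^\beta(2\pi e)^{-\beta}(1\vee\beta)^{-2}(\sigma^2L^{d/\beta}/(p_Xn))^{2\beta/(2\beta+d)}$. Matching this with the lower bound, where I must check that the profile normalisation likewise tends to $1$, completes \eqref{eq:dinfty}. The delicate points are the uniform $o(1)$ control of the $p_X$ variation over radius $r$, which does not shrink with $d$ in an obvious way, and the condition on $p_X(x_0)\mathrm{vol}$, which is there to keep the design density factors bounded in this computation.
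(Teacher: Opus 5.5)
The lower bound, which carries the whole theorem, is optimised in the wrong direction, and as written it fails. Put $a:=2\beta/(2\beta+d)$ and $R:=\int\phi^2$, and eliminate $r$ through $\cI=nh^2r^dp_X(x_0)R/\sigma^2$ with $h=Lr^\beta/(1\vee\beta)$. Your AVT1 bound becomes
\[
\frac{h^2}{(\sqrt{\cI}+1)^2}
=
\frac{L^{2-2a}}{(1\vee\beta)^{2-2a}}\Bigl(\frac{\sigma^2}{n\,p_X(x_0)\,R}\Bigr)^{a}\,\frac{\cI^{a}}{(\sqrt{\cI}+1)^2}.
\]
The prefactor is already asymptotic to the target. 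Stirling gives $(\mathrm{vol}\,\mathcal{B}_0(1))^{-a}\sim(d/(2\pi e))^{\beta}$. For $\phi=(1-\|u\|_2^\beta)_+$, the remaining factor $2\beta^2/\{(\beta+d)(2\beta+d)\}$ in $R$ costs only $d^{O(1/d)}\to1$, so the concentration of $\int\phi^2$ near the sphere is not the mechanism. Since $h^2\propto r^{2\beta}\propto\cI^{a}$, the relevant factor is $\cI^{a}/(\sqrt{\cI}+1)^2$. Your bookkeeping instead treats it as $\cI^{1-a}/(\sqrt{\cI}+1)^2$: the two exponents are swapped. With $\cI\to\infty$ the ratio to the target is $\sim\cI^{a-1}\to0$. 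The true maximiser is $\sqrt{\cI}=a/(1-a)\to0$, with value $a^{2a}(1-a)^{2(1-a)}\to1$.

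The missing idea is that the augmented bound is sharp in the \emph{no-information} limit. As $\cI\to0$, $1/(\sqrt{\cI}+1)^2\to1$, which is the trivial minimax risk for $t\in[-1,1]$. Classical van Trees gives $1/(\cI+\pi^2)\to1/\pi^2$ there, and that is exactly where the $\pi^2$ is lost. For $\cI\to\infty$ both bounds are $\sim1/\cI$, so your mechanism could not separate them at all. It also fits your own upper-bound heuristic: the risk is bias-dominated, so the least favourable subpath has an essentially undetectable height and risk $\approx h^2$. The paper does exactly this, via AVT2 with $\lambda=(a/(1-a))^3\to0$ and the crude bound $\sqrt{\lambda t^2+m^2}\le\sqrt{\lambda+m^2}$, obtaining a factor that likewise tends to $1$.

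Consequently, the regime $(\log n)/d\to\infty$ is not there to let $\cI$ grow. Take $\sqrt{\cI}=a/(1-a)$ and use $p_X(x_0)\,\mathrm{vol}\,\mathcal{B}_0(1)\in[1/C,C]$. Then $r=n^{-1/(2\beta+d)}(1+o(1))$, which tends to $0$ precisely because $(\log n)/d\to\infty$. This kills the design factor $1+O(L_Xr^\alpha)$ in $\cI$ (the paper's $c_{n,d}\to0$) and settles the worry in your last sentence.

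In the upper bound, your explicit radius $r=(\mathrm{vol}\,\mathcal{B}_0(1)\,n\,p_X(x_0)/\sigma^2)^{-1/d}$ makes the variance $\sigma^2/(n\,p_X(x_0)\,\mathrm{vol}\,\mathcal{B}_0(1)\,r^d)$ equal to $1$, which is not negligible. The correct balance is
\[
r^{2\beta+d}=\frac{d\,(1\vee\beta)^2\sigma^2}{2\beta L^2c_{\beta,d}^2\,n\,p_X(x_0)\,\mathrm{vol}\,\mathcal{B}_0(1)}.
\]
With this choice the variance is a fraction $2\beta/d$ of the squared bias, and the risk matches the target. The paper uses the kernel $(1-\|u\|_2^\beta)_+$, but your uniform kernel works equally well.

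Finally, for $\beta\in(1,2]$ the ``symmetric cancellation of the gradient term'' requires local symmetry of $p_X$ about $x_0$, which is not assumed. Since $\nabla f(x_0)$ is unconstrained on $\cH(\beta,L)$, there is a residual bias proportional to $r\,\nabla f(x_0)^\top\int_{\mathcal{B}_0(1)}u\,p_X(x_0+ru)\,du$. This makes the worst-case risk of a local average infinite. The paper's bias bound silently drops the same term; a local linear fit is the natural repair.
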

The additional conditions on the covariate density are satisfied for example in the case of $X\sim\mathrm{Unif}\,\mathcal{B}_0(1)$ and $x_0\in\mathrm{int}\,\mathcal{B}_0(1)$ for the constants $(C,\alpha,L_X)=(1,1,1/(1-\|x_0\|_2))$. 
Note the above result does not follow by application of the van Trees inequality, which would only provide an upper bound on~\eqref{eq:dinfty} of $\pi^2$. 

Another benefit of the augmented van Trees inequality is its simplicity of application. Similar minimax lower bounds that quantify explicit constants tend to require rather intricate treatments of specific models; see e.g.~\citet{donoho3, fan-minimax} for explicit constants in specific models. 
In contrast however, the augmented van Trees inequality yields similarly sharp constants whilst being remarkably simple to implement; provided the (asymptotic) Fisher information for a parametric submodel can be quantified a corresponding lower bound immediately follows. This therefore allows us to consider the broad setting here of H\"older function estimation with any smoothness $\beta\in(0,2]$ and any dimension $d\in\N$. These results can therefore also naturally extend to provide lower bounds for arbitrary error distributions with finite Fisher information and sufficiently smooth density.

\section{Augmenting the generalized van Trees inequality}\label{sec:takatsu}

In recent work, \citet{gvt} introduce the generalized van Trees inequality, which extends the classical van Trees bound through the inclusion of an additional absolutely continuous \emph{approximation function} $\phi:T\to\R$. This extension enables the derivation of minimax and local asymptotic minimax lower bounds for non-differentiable functionals as well as for estimation problems arising in irregular statistical models. 
Our flexible augmentation strategy can similarly be incorporated into this generalized framework, again allowing for sharper constants in these irregular settings, as given by the theorem below. 

\begin{theorem}[The augmented generalized van Trees inequality]\label{thm:AGVT}
            Suppose $T\subset\R^d$ is a closed subset, and $(P_t)_{t\in T}$ is a parametric model with $P_t\ll\nu$ for all $t\in T$ for a $\sigma$-finite measure $\nu$ on $(\cX,\mathcal{A})$, and again denote $p(\cdot,t)$ to be the density of $P_t$ with respect to $\nu$, which is absolutely continuous in $t$ for $\nu$-almost all $x$, with Fisher information $\cI(t):=\int_\cX\frac{\partial_t p(x,t)\,\partial_t p(x,t)^\top}{p(x,t)}\,d\nu(x)$ integrable on T, with operator norm $\|\cI(t)\|_{\mathrm{op}}<\infty$. 
            Assume further that:
            \begin{enumerate}[label=(\roman*)]
                \item (Prior densities) The prior density $\mu$ is absolutely continuous on $T$.\label{ass:AGVT-priors}
                \item (Approximation function) The approximation function $\phi:T\to\R^k$ is absolutely continuous with almost-everywhere derivative $\nabla\phi:T\to\R^{k\times d}$. 
                \item (Augmentation function) The augmentation function $\alpha:T\to\R$ is absolutely continuous with $\alpha(t)=0$ for all $t\in\partial T$ on the boundary of $T$.
                \label{ass:AGVT-aug}
            \end{enumerate}
            Then for any absolutely continuous function $\psi:T\to\R^k$ with almost-everywhere derivative $\nabla\psi:T\to\R^{k\times d}$, and for any measurable function $\hat\psi:\cX\to\R^k$, any vector norm $\|\cdot\|:\R^k\to\R_+$, with dual norm $\|\cdot\|_*$,
            \begin{equation}
                \sup_{t\in T}\E_{P_t}\Bigl[\bigl\|\hat\psi({\bf X})-\psi(t)\bigr\|^2\Bigr]
                \geq 
                \sup_{\mu,\alpha}\Biggl(\sup_{\|u\|_*\leq1}\bigl\|\Gamma_{\mu,\phi,\alpha}^{1/2}u\bigr\|
                -
                \biggl(\int_T\|\psi(t)-\phi(t)\|^2 \mu(t)\,dt\biggr)^{1/2}
                \Biggr)_+^2,
            \end{equation}
            where $(\cdot)_+:=\max(\cdot,0)$, and 
            \begin{equation*}
                \Gamma_{\mu,\phi,g}
                :=
                \biggl(\int_T\nabla\phi(t)\alpha(t)\,dt\biggr)^\top
                \biggl(\int_T\frac{\cI(t)\alpha^2(t)+\nabla \alpha(t)\nabla \alpha(t)^\top}{\mu(t)}
                \biggr)^{-1}
                \biggl(\int_T\nabla\phi(t)\alpha(t)\,dt\biggr),
            \end{equation*}
            and 
            where the suprema is taken over all priors $\mu$ satisfying~\ref{ass:AGVT-priors} and all augmentation functions $\alpha$ satisfying~\ref{ass:AGVT-aug}. 
\end{theorem}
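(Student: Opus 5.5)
The plan follows the proof of the generalized van Trees inequality of \citet{gvt}, with the prior $\mu$ in the integration-by-parts step replaced by the augmentation function $\alpha$, exactly as in Theorem~\ref{thm:gvt}.

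Fix $\mu$, $\alpha$, $\phi$ and a unit dual vector $u$ with $\|u\|_*\leq 1$. First, bound the worst-case risk by the Bayes risk as in~\eqref{eq:sup-leq-prior}, and split via the triangle inequality in $L^2(\mu\otimes P_t)$:
\[
\Bigl(\int_T\E_{P_t}\|\hat\psi-\psi(t)\|^2\mu\Bigr)^{1/2}\geq\Bigl(\int_T\E_{P_t}\|\hat\psi-\phi(t)\|^2\mu\Bigr)^{1/2}-\Bigl(\int_T\|\psi-\phi\|^2\mu\Bigr)^{1/2}.
\]
Taking the positive part and squaring then reduces the claim to showing
\[
\int_T\E_{P_t}\|\hat\psi-\phi(t)\|^2\mu\geq\sup_{\|u\|_*\leq1}\|\Gamma^{1/2}u\|^2,
\]
that is, an augmented van Trees bound for the surrogate target $\phi$.

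The core identity comes from integrating by parts in $t$. Fix a vector $v\in\R^d$ (later optimized) and define the score-type quantity
\[
\xi(x,t)=\frac{\nabla_t\{\alpha(t)p(x,t)\}v}{\mu(t)p(x,t)}=\frac{\bigl(\alpha(t)\rho_t(x)+\nabla\alpha(t)\bigr)^\top v}{\mu(t)}.
\]
Since $\alpha$ vanishes on $\partial T$ and $\int_T\nabla_t(\alpha p)\,dt=0$ (divergence theorem or Fubini along coordinate lines, using absolute continuity), we get
\[
\int_T\int_\cX\bigl(\hat\psi(x)-\phi(t)\bigr)\,\nabla_t\{\alpha p\}v\,d\nu\,dt=\int_T\nabla\phi(t)v\,\alpha(t)\,dt.
\]
The $\hat\psi$ term vanishes by the boundary condition, and the $\phi$ term is handled by integration by parts, with the boundary term again killed by $\alpha|_{\partial T}=0$. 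Pairing with a vector $w$, where $\|w\|_*\leq1$, and applying Hölder together with Cauchy--Schwarz in $L^2(\mu\otimes P_t)$ gives
\[
w^\top\Bigl(\int_T\nabla\phi\,\alpha\Bigr)v\leq\Bigl(\int\E\|\hat\psi-\phi\|^2\mu\Bigr)^{1/2}\Bigl(v^\top\int_T\frac{\cI\alpha^2+\nabla\alpha\nabla\alpha^\top}{\mu}\,v\Bigr)^{1/2},
\]
where the cross term $\E_{P_t}[\rho_t]=0$ drops out. This mirrors the computation after Theorem~\ref{thm:genrealized-loss}.

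Next, optimize over $v$. Writing $A=\int_T\nabla\phi\,\alpha$ and $B=\int_T(\cI\alpha^2+\nabla\alpha\nabla\alpha^\top)/\mu$, choose $v=B^{-1}A^\top w$. This yields
\[
\bigl(w^\top A B^{-1}A^\top w\bigr)^{1/2}\leq(\text{Bayes risk for }\phi)^{1/2},
\]
and taking the supremum over $\|w\|_*\leq1$ gives the required quantity, $\sup_{\|u\|_*\leq 1}\|\Gamma^{1/2}u\|$, under the paper's convention for this expression. If $B$ is singular, use a pseudo-inverse or a limiting argument.

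I expect the main obstacle to be justifying the multivariate integration by parts on a general closed $T\subset\R^d$ with merely absolutely continuous $\alpha$, $\phi$ and $p$. This needs Fubini for the interchange and integrability of $\hat\psi\,\nabla(\alpha p)$. I would first handle the case of finite Bayes risk (otherwise the bound is trivial), then truncate $\hat\psi$ and pass to the limit by dominated convergence, as in the proof of Theorem~\ref{thm:gvt}. Finally, take the supremum over $(\mu,\alpha)$.
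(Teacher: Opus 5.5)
Your proposal is correct and follows essentially the paper's route. It uses the same score $\zeta=\partial_t(p\alpha)/(p\mu)$, the same boundary/integration-by-parts identities driven by $\alpha|_{\partial T}=0$, the same information computation with a vanishing cross term, and the reverse triangle inequality in $L^2(\mu\otimes P_t)$ that the paper takes from \citet[Lemma 4]{gvt}. The only difference is presentational: the paper writes a block-matrix Schur-complement (Loewner-order) inequality and then passes to the norm, whereas you scalarize along $v$ and a dual vector $w$ and optimize $v=B^{-1}A^\top w$, which is that Schur-complement step unpacked. Your version also yields the dimensionally consistent $k\times k$ matrix $AB^{-1}A^\top$, with $\|\Gamma^{1/2}u\|$ read as the Euclidean norm.
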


As in Theorem~\ref{thm:gvt}, we again do not require the prior density to be `nice' in the senses of~\citet{gassiat, gvt} in that we do not require the density of the prior to vanish on the boundary of $T$. 
In regular estimation settings such as Section~\ref{sec:minimax} the optimal approximation function remains $\phi=\psi$, recovering the standard augmented van Trees inequality of Theorem~\ref{thm:gvt} (see Appendix~\ref{appsec:gvt} for a discussion). 

\section{Discussion}
We introduce a helpful extension of the van Trees inequality, that accommodates priors whose densities don't vanish at the boundaries of its support, and that also provides a tighter lower bound for any fixed prior. Consequently, this augmented van Trees inequality allows one to gain tighter bounds on the worst case risk. 
We apply this inequality to the problem of minimax H\"older function estimation to near-sharp constants in this problem, and the exact constant in the high-dimensional regime. 
We extend the (augmented) van Trees inequality to accommodate loss functions beyond mean squared error, and for estimation in irregular models.

The augmented van Trees inequality gives a remarkably simple and accessible method for deriving nonparametric minimax bounds, that often attains competitively (if not better) constants compared to the more involved convergence of experiments theory. 
In the case of constructing minimax lower bounds on the mean squared error, the problem simply reduces to that of quantifying the Fisher information of a path through the model class. Therefore we believe the augmented van Trees inequality acts as a very helpful tool in the statisticians toolbox for attaining simple yet strong minimax lower bounds in a range of settings.

\paragraph*{Funding.} EHY was supported by European Research Council Advanced Grant 101019498.

\bibliography{avt.bib}
\appendix

\section{Proofs of results}

\subsection{Proof of Theorem~\ref{thm:gvt}}

Note that Theorem~\ref{thm:gvt} follows as a corollary of Theorem~\ref{thm:AGVT}. However, due to the simplicity of the case of Theorem~\ref{thm:gvt} we include a brief proof in this restricted case, which is still sufficient for all our results in Section~\ref{sec:minimax}. 

\begin{proof}[Proof of Theorem~\ref{thm:gvt}]
    Define
    \begin{equation*}
        \zeta(x,t):=\frac{\partial_t\big(p(x,t)\alpha(t)\big)}{p(x,t)\mu(t)},
    \end{equation*}
    where $\partial_t(g(x,t))$ denotes the derivative with respect to $t$ of $g:\cX\times T\to\R$. 
    Then by the Cauchy--Schwarz inequality,
    \begin{align}
        \int_T\E_{P_t}\big[(\hat{t}(&\boldsymbol{X})-t)^2\big]\mu(t)\,dt
        \notag
        \\&\geq
        \bigg(\int_T\int_\cX p(x,t)\mu(t)\zeta^2(x,t)\,d\nu(x)\,dt\bigg)^{-1}
        \bigg(\int_T\int_\cX(\hat{t}(x)-t)p(x,t)\mu(t)\zeta(x,t)\,d\nu(x)\,dt\bigg)^2.
        \label{eq:3}
    \end{align}
    We will simplify the numerator and denominator of~\eqref{eq:3} in turn. 
    First note that as $p(x,t)$, $\mu(t)$ and $\alpha(t)$ are absolutely continuous with $\alpha(t_1)=\alpha(t_2)=0$,
    \begin{equation}\label{eq:1}
        \int_Tp(x,t)\mu(t)\zeta(x,t)\,dt
        =\int_T \partial_t\big(p(x,t)\alpha(t)\big)\,dt=0,
    \end{equation}
    for $\nu$-almost all $x$.  
    Thus
    \begin{align}\label{eq:2}
        \int_T\int_\cX \hat{t}(x)p(x,t)\mu(t)\zeta(x,t)\,d\nu(x)\,dt=0.
    \end{align}
    By similar arguments (also by absolute continuity) and by integration by parts,
    \begin{equation*}
        \int_Tt\,\partial_t\big(p(x,t)\alpha(t)\big)\,dt = -\int_Tp(x,t)\alpha(t)\,dt,
    \end{equation*}
    thus
    \begin{equation}\label{eq:4}
        \int_T\int_\cX tp(x,t)\mu(t)\zeta(x,t)\,d\nu(x)\,dt
        =
        -\int_T\int_\cX p(x,t) \,d\nu(x) \alpha(t)\,dt
        =
        -\int_T \alpha(t) \,dt
        .
    \end{equation}
    The numerator term of~\eqref{eq:3} therefore simplifies to
    \begin{align}\label{eq:5}
        \biggl(\int_T\int_\cX (\hat{t}(x)-t)p(x,t)\mu(t)\zeta(x,t)\,d\nu(x)\,dt\biggr)^2
        =
        \biggl(\int_T \alpha(t)\,dt\biggr)^2
    \end{align}
    by applying~\eqref{eq:2}~and~\eqref{eq:4}.

For the denominator term of~\eqref{eq:3},
\begin{align}
    &\quad\;
    \int_T\int_\cX p(x,t)\mu(t)\zeta^2(x,t)\,d\nu(x)\,dt
    \notag
    \\
    &=
    \int_T\int_\cX \frac{\big(\partial_tp(x,t)\alpha(t)+p(x,t)\alpha'(t)\big)^2}{p(x,t)\mu(t)}\,d\nu(x)\,dt
    \notag
    \\
    &=
    \int_T\int_\cX \frac{(\partial_tp(x,t))^2}{p(x,t)}\frac{\alpha^2(t)}{\mu(t)}\,d\nu(x)\,dt
    +
    2\int_T\int_\cX \partial_tp(x,t)\,d\nu(x)\frac{\alpha(t)\alpha'(t)}{\mu(t)}\,dt
    +
    \int_T\int_\cX p(x,t)\frac{\big(\alpha'(t)\big)^2}{\mu(t)}\,d\nu(x)\,dt
    \notag
    \\
    &=
    \int_T\cI(t)\frac{\alpha^2(t)}{\mu(t)}\,dt
    +
    \int_T\frac{\alpha'(t)^2}{\mu(t)}\,dt,
    \label{eq:6}
\end{align}
because the cross term is zero; note firstly that by Jensen's inequality
\begin{equation*}
    \int_\cX \bigl|\partial_t p(x,t)\bigr|\,d\nu(x)
    \leq 
    \sqrt{\cI(t)}<\infty,
\end{equation*}
for all $t\in T$, thus we may apply the dominated convergence theorem to obtain
\begin{equation*}
    \int_\cX \partial_tp(x,t)\,d\nu(x)
    =
    \frac{\partial}{\partial t}\int_\cX p(x,t)\,d\nu(x)=0.
\end{equation*}
Combining~\eqref{eq:3},~\eqref{eq:5}~and~\eqref{eq:6} we obtain the required result;
\begin{equation*}
    \int_T\E_{P_t}\big[(\hat{t}(\boldsymbol{X})-t)^2\big]\mu(t)\,dt
    \geq
    \frac{\big(\int_T\alpha(t)\,dt\big)^2}{\int_T\frac{\cI(t)\alpha^2(t)+\alpha'(t)^2}{\mu(t)}\,dt}.
\end{equation*}

\end{proof}

\subsection{Proof of Theorem~\ref{prop:gvt-simple}}
\begin{proof}[Proof of Theorem~\ref{prop:gvt-simple}]
    Fix some $\alpha\in\mathcal{A}$. By the Cauchy--Schwarz inequality,
    \begin{equation*}
        \biggl(\int_T\frac{\cI \alpha^2+(\alpha')^2}{\mu}\biggr)
        =
        \biggl(\int_T\mu\biggr)
        \biggl(\int_T\frac{\cI \alpha^2+(\alpha')^2}{\mu}\biggr)
        \geq
        \biggl(\int_T\sqrt{\cI \alpha^2+(\alpha')^2}\biggr)^2,
    \end{equation*}
    with equality if and only if
    \begin{equation*}
        \mu \propto \sqrt{\cI \alpha^2+(\alpha')^2}
    \end{equation*}
    As $\mu$ is a density it follows that the optimal prior $\mu^*_{\alpha}$ is
    \begin{equation}\label{eq:mu-opt}
        \mu^*_{\alpha} := \frac{\sqrt{\cI \alpha^2+(\alpha')^2}}{\int_T\sqrt{\cI \alpha^2+(\alpha')^2}},
    \end{equation}
    which is absolutely continuous as all of $\cI,\alpha,\alpha'$ are absolutely continuous by assumption. As $\alpha$ is differentiable with absolutely continuous derivative $\max\bigl\{\int_T|\alpha|,\int_T|\alpha'|\bigr\}<\infty$. 
    Applying Theorem~\ref{thm:gvt} with $\mu=\mu_\alpha^*$,
    \begin{equation*}
        \sup_{\mu}\frac{\bigl(\int_T\alpha \bigr)^2}{\int_T\frac{\cI \alpha^2+(\alpha')^2}{\mu}}
        =
        \frac{\bigl(\int_T\alpha \bigr)^2}{\int_T\frac{\cI \alpha^2+(\alpha')^2}{\mu^*_{\alpha}}}
        =
        \Biggl(\frac{\int_T\alpha }{\int_T\sqrt{\cI \alpha^2+(\alpha')^2}}\Biggr)^2.
    \end{equation*}
    The result follows by taking the supremum over all $\alpha\in\mathcal{A}$.
\end{proof}

\subsection{Proof of the result of Example~\ref{ex:aVT1}}

\begin{proof}[Proof of Example~\ref{ex:aVT1}]
    Fix $\delta\in(0,1)$. Consider the augmentation function $\alpha:[-1,1]\to\R$ given by
    \begin{equation*}
        \alpha(t) := 
        \begin{cases}
            e^{-k|t|} &\quad\text{if }|t|\leq 1-\delta,
            \\
            
            \tfrac{1}{\delta}e^{-(1-\delta)k}(1-|t|)&\quad\text{if } 1-\delta< |t|\leq 1.   
        \end{cases}
        \end{equation*}
        By construction $\alpha$ is absolutely continuous with $\alpha(1)=\alpha(-1)=0$. 
        Also define $f:[0,1]\to\R$ given by $$f(t):=\frac{\alpha(t)}{-\alpha'(t)}
        =\begin{cases}
            \frac{1}{k}&\quad\text{if }t\in[0,1-\delta],
            \\
            \tfrac{1-t}{\delta}&\quad\text{if }t\in(1-\delta,1].
        \end{cases}
        $$
        As both $\alpha$ restricted to the domain $[0,1]$ and $f$ is monotonic, we may write
        \begin{equation*}
            f(\alpha(t))
            =
             \begin{cases}
                 \delta e^{(1-\delta)k}\alpha(t)
                 &\text{if }\alpha(t)\in\bigl[0,e^{-(1-\delta)k}\bigr],
                 \\
                 \tfrac{1}{k}
                 &\text{if }\alpha(t)\in\bigl(e^{-(1-\delta)k},1\bigr].
             \end{cases}
        \end{equation*}
        Then
        \begin{multline*}
            \frac{\int_{-1}^1\alpha(t)\,dt}{\int_{-1}^1\sqrt{\cI\alpha^2(t)+\alpha'(t)^2}\,dt}
            =
            \frac{\int_0^1\alpha(t)\,dt}{\int_0^1\sqrt{\cI\alpha^2(t)+\alpha'(t)^2}\,dt}
            =
            \frac{\int_{0}^1f(\alpha)\,d\alpha}{\int_{0}^1\sqrt{\cI(f(\alpha))^2+1}\,d\alpha}
            \\
            =
            \frac{\frac{\delta}{2}e^{-(1-\delta)k} + \tfrac{1}{k}\bigl(1-e^{-(1-\delta)k}\bigr)}{\sqrt{\tfrac{\cI}{k^2}+1}\,\bigl(1-e^{-(1-\delta)k}\bigr)+e^{-(1-\delta)k}\int_0^1\sqrt{\delta^2\cI u^{2}+1}\,du}.
        \end{multline*}
        Taking $\delta\searrow0$ we obtain
        \begin{multline*}
        \sup_{\alpha}\frac{\int_{-1}^1\alpha(t)\,dt}{\int_{-1}^1\sqrt{\cI\alpha^2(t)+\alpha'(t)^2}\,dt}
        \geq
        \sup_{k>0}
            \lim_{\delta\searrow0}\frac{\int_{-1}^1\alpha_\delta(t)\,dt}{\int_{-1}^1\sqrt{\cI\alpha_\delta^2(t)+\alpha_\delta'(t)^2}\,dt} 
            \\
            =
            \sup_{k>0}
            \frac{1}{\sqrt{\cI+k^2}+\frac{k}{e^k-1}}
            \geq
            \lim_{k\searrow0}
            \frac{1}{\sqrt{\cI+k^2}+\frac{k}{e^k-1}}
            =
            \frac{1}{\sqrt{\cI}+1}.
        \end{multline*}
\end{proof}

\subsection{Proof of the result of Example~\ref{ex:hypergeo}}\label{appsec:hypergeo}

\begin{proof}[Proof of the result of Example~\ref{ex:hypergeo}]
First note $\alpha(1)=\alpha(-1)=0$ and $\alpha$ is absolutely continuous. Further, $\int_{-1}^1|\alpha|=2\int_0^1 (1-t)^m\,dt=\frac{2}{m+1}\leq 2$ and $\int_{-1}^1|\alpha'|=-2\int_0^1\alpha'=2(\alpha(0)-\alpha(1))=2$. The class $\mathcal{A}$ therefore satisfies the assumptions of Theorem~\ref{prop:gvt-simple}. Then direct calculations show that $\int_{-1}^1\alpha =\frac{2}{m+1}$ and
\begin{equation*}
    \int_{-1}^1\sqrt{\cI \alpha^2+(\alpha')^2}
    =
    2\int_0^1\sqrt{\cI t^{2m}+m^2t^{2(m-1)}}\,dt
    =2\int_0^1 t^{m-1}\sqrt{\cI t^2+m^2} \,dt
    =
    2 _2F_1\bigl(\tfrac{-1}{2},\tfrac{m}{2},\tfrac{m}{2}+1;\tfrac{-\cI}{m^2}\bigr),
\end{equation*}
which combined with Theorem~\ref{prop:gvt-simple} gives the result.
\end{proof}

\subsection{Proof of Theorem~\ref{thm:genrealized-loss}}

\begin{proof}[Proof of Theorem~\ref{thm:genrealized-loss}]
    We adopt the same notation as the proof of Theorem~\ref{thm:gvt}. 
    Then by H\"older's inequality
    \begin{multline*}
        \biggl\{\int_T\E_{P_t}\bigl[|\hat{t}(\boldsymbol{X})-t|^p\bigr]\mu(t)\,dt\biggr\}^{1/p}
        \\
        \geq
        \biggl\{\int_T\int_\cX p(x,t)\mu(t)|\zeta(x,t)|^q\,d\nu(x)\,dt\biggr\}^{-1/q}
        \biggl|\int_T\int_\cX(\hat{t}(x)-t)p(x,t)\mu(t)\zeta(x,t)\,d\nu(x)\,dt\biggr|.
    \end{multline*}
    Then
    \begin{equation*}
        |\zeta(x,t)|^q
        =
        \frac{1}{\mu(t)^q}\biggl|\frac{\partial_tp(x,t)}{p(x,t)}\alpha(t) + \alpha'(t)\biggr|^q,
    \end{equation*}
    and so
    \begin{equation*}
        \int_T\int_\cX p(x,t)\mu(t)|\zeta(x,t)|^q\,d\nu(x)\,dt
        =
        \int_T \mu(t)^{1-q}\,\E_{P_t}\bigl[|\alpha'(t)+\rho_t(\boldsymbol{X})\alpha(t)|^q\bigr]\,dt.
    \end{equation*}
    Further, take
    \begin{equation*}
        \mu(t) := \frac{\bigl\{\E_{P_t}\bigl[|\alpha'(t)+\rho_t(\boldsymbol{X})\alpha(t)|^q\bigr]\bigr\}^{1/q}}{\int_T\bigl\{\E_{P_{\tau}}\bigl[|\alpha'(\tau)+\rho_{\tau}(\boldsymbol{X})\alpha(\tau)|^q\bigr]\bigr\}^{1/q}d\tau}.
    \end{equation*}
    Alongside equation~\eqref{eq:5}, we conclude
    \begin{equation*}
        \sup_{t\in T}\bigl\{\E_{P_t}\bigl[|\hat{t}(\boldsymbol{X})-t|^p\bigr]\bigr\}^{1/p}
        \geq
        \frac{\bigl|\int_T\alpha(t)\,dt\bigr|}{\int_T\bigl\{\E_{P_t}\bigl[|\alpha'(t)+\rho_t(\boldsymbol{X})\alpha(t)|^q\bigr]\bigr\}^{1/q}dt}
    \end{equation*}
\end{proof}

\subsection{Proof of Theorem~\ref{thm:all-minimax}}

We divide the proof of Theorem~\ref{thm:all-minimax} into the lower and upper bounds individually.

\begin{lemma}\label{lem:LB}
As in the setup of Theorem~\ref{thm:all-minimax},
    \begin{equation*} 
        \liminf_{n\to\infty}\frac{\inf_{\hat{f}}\sup_{f\in\cH(\beta,L)}\E_f\Bigl[\bigl(\hat{f}(x_0)-f(x_0)\bigr)^2\Bigr]}{\bigl(\frac{d^d (\beta+d)^{2\beta}\Gamma^{2\beta}(1+d/2)}{\pi^{\beta d}\beta^{2\beta}(2\beta+d)^{d}(1\vee\beta)^{2d}}\bigr)^{1/(2\beta+d)}\bigl(\frac{L^{d/\beta}\sigma^2}{p_X(x_0)n}\bigr)^{2\beta/(2\beta+d)}}
        \geq
        A_{\beta,d},
    \end{equation*}
    for the function
    \[A_{\beta,d}:=A\Bigl(\frac{2\beta}{2\beta+d}\Bigr),
    \qquad
    A(a) := 
    \frac{1}{a^a(1-a)^{1-a}}\sup_{\substack{\lambda>0,\\m>0}}\frac{\lambda^a}{\bigl\{(m+1)\hspace{0em}_2F_1\bigl(-\frac{1}{2},\frac{m}{2},\frac{m}{2}+1;-\frac{\lambda}{m}\bigr)\bigr\}^2}.
    \]
    Moreover,
    \begin{equation*}
        \inf_{\beta\in(0,2]}\inf_{d\in\N}A_{\beta,d}\geq\frac{1}{1.69},
        \quad\text{and}\quad
        A_{2,1}\geq\frac{1}{1.37},
    \end{equation*}
\end{lemma}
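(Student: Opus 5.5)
The approach is to reduce to a one-parameter Gaussian submodel through a localized bump and then apply the AVT2 bound of Example~\ref{ex:hypergeo}, i.e.\ Theorem~\ref{prop:gvt-simple} with $\alpha(t)=(1-|t|)^m$ on $T=[-1,1]$.

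\textbf{Submodel and Fisher information.} Fix a kernel $K:\R^d\to\R$ with $K(0)=1$ and $K\in\cH(\beta,1)$, supported on $\mathcal{B}_0(1)$. For $\beta\le1$ I would take $K(u)=(1-\|u\|_2^\beta)_+$. For $\beta\in(1,2]$ I would take $K(u)=(1-\|u\|_2^\beta/\beta)_+$, or a $C^1$ modification of it near the truncation. The $(1\vee\beta)$ factors in the normaliser suggest this normalisation, and I would check that the resulting function lies in $\cH(\beta,1)$. Set
\[
f_t(x)=t\,L h^\beta K\bigl((x-x_0)/h\bigr),\qquad t\in[-1,1],
\]
so that $f_t\in\cH(\beta,L)$ and $f_t(x_0)=tLh^\beta$. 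Then any estimator $\hat f(x_0)$ induces an estimator $\hat t=\hat f(x_0)/(Lh^\beta)$, and
\[
\sup_f\E_f\bigl[(\hat f(x_0)-f(x_0))^2\bigr]\ \ge\ L^2h^{2\beta}\sup_{t\in[-1,1]}\E_t\bigl[(\hat t-t)^2\bigr].
\]
Under the Gaussian errors the Fisher information of $n$ observations is constant in $t$:
\[
\cI=\frac{nL^2h^{2\beta}}{\sigma^2}\int K^2\bigl((x-x_0)/h\bigr)p_X(x)\,dx=(1+o(1))\,\frac{nL^2h^{2\beta+d}p_X(x_0)\|K\|_2^2}{\sigma^2}
\]
as $h\to0$, using continuity of $p_X$ at $x_0$. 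The assumptions of Theorem~\ref{thm:gvt} are immediate here. Example~\ref{ex:hypergeo} then gives the lower bound
\[
L^2h^{2\beta}\Big/\Bigl[(m+1)^2\,{}_2F_1\bigl(-\tfrac12,\tfrac m2,\tfrac m2+1;-\tfrac{\cI}{m^2}\bigr)^2\Bigr].
\]

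\textbf{Optimising the bandwidth.} Rather than optimising over $h$ directly, I would parametrise by $\cI$: fix a target value, so that $h^{2\beta+d}\propto \cI\sigma^2/(nL^2p_X(x_0)\|K\|_2^2)$ and $h\to0$ as $n\to\infty$. Writing $a=2\beta/(2\beta+d)$, the bound becomes
\[
L^2h^{2\beta}=(L^{d/\beta}\sigma^2/(p_X(x_0)n))^{a}\,\|K\|_2^{-2a}\,\cI^{a},
\]
multiplied by the hypergeometric factor. Reparametrising $\cI$ through $\lambda$ (so that $\cI/m^2$ matches the argument $\lambda/m$ in $A(a)$), the $n$-free part is $\sup_{\lambda,m}\lambda^a/\{(m+1){}_2F_1(\cdots)\}^2$. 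It remains to compute $\|K\|_2^2$ in polar coordinates. The computation uses $\mathrm{vol}\,\mathcal{B}_0(1)=\pi^{d/2}/\Gamma(1+d/2)$ and the Beta integral $\int_0^1(1-r^\beta/c)^2r^{d-1}dr$, and should reproduce the displayed normaliser up to exactly the factor $a^a(1-a)^{1-a}$ appearing in $A(a)$. Taking $\liminf_n$ then gives the first claim with constant $A_{\beta,d}$. The $o(1)$ terms disappear because $\cI$ is held fixed while $h\to0$.

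\textbf{Numerical constants.} Since $a=2\beta/(2\beta+d)$ ranges over $(0,4/5]$, it suffices to show $\inf_{a\in(0,4/5]}A(a)\ge1/1.69$, and separately to show $A(4/5)\ge1/1.37$ for $(\beta,d)=(2,1)$. For $A(4/5)$ it is enough to exhibit explicit $(\lambda,m)$ and evaluate the hypergeometric function, which equals $\int_0^1 t^{m-1}\sqrt{\cdot\,t^2+m^2}\,dt$ up to scaling, hence is easy to bound rigorously.

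The uniform bound over $a$ is the main obstacle. I would partition $(0,4/5]$ into finitely many intervals and assign to each a fixed pair $(\lambda_j,m_j)$. On each interval I would lower-bound $\lambda_j^a/(a^a(1-a)^{1-a})$ by monotonicity in $a$ (it is log-convex/log-linear in $a$). Near $a\to0$ I would treat the limit separately: choosing $\lambda$ small makes the ratio tend to $1$, with $m$ chosen appropriately. A secondary concern is verifying the Hölder membership of the truncated kernel for $\beta\in(1,2]$ with constant exactly $1$. If the proposed kernel fails, I would use the $C^1$ kernel $1-\|u\|^\beta/\beta$ on a ball glued to a decreasing tail. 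That changes $\|K\|_2$, and I would then recheck that the normaliser matches.
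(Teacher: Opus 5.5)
Your reduction is the paper's own. The paper uses $f_t=\frac{tLh^\beta}{1\vee\beta}(1-\|u\|_2^\beta)_+$. That is exactly your family after the bandwidth change $h\mapsto\beta^{1/\beta}h$, so your $\|K\|_2^{-2a}$ equals the paper's $(1\vee\beta)^{-2d/(2\beta+d)}R(K)^{-a}$. The paper then applies AVT2 with $\lambda$ the limiting Fisher information and computes $R(K)$, and your bookkeeping of the factor $a^a(1-a)^{1-a}$ is correct.

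The step that fails is your attempt to match the argument $-\lambda/m$ by reparametrising $\cI=\lambda m$. This turns $\cI^a$ into $\lambda^a m^a$, so it does not produce $A(a)$ as displayed. No reparametrisation can, because the literal statement is false. Write ${}_2F_1(-\tfrac12,\tfrac m2,\tfrac m2+1;-z)=\tfrac m2\int_0^1u^{m/2-1}\sqrt{1+zu}\,du\le 1+\tfrac{m\sqrt z}{m+1}$. With $z=\lambda/m$, the denominator is then at most $(m+1+\sqrt{\lambda m})^2\to1$ as $m\to0$. Hence the displayed $A(a)$ equals $+\infty$ for every $a\in(0,1)$, contradicting Lemma~\ref{lem:UBB}. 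The $-\lambda/m$ is a typo for $-\lambda/m^2$; compare Example~\ref{ex:hypergeo} and $\underline{A}$ in Lemma~\ref{lem:finite}. The paper's proof simply sets $\lambda$ equal to the limiting Fisher information, and you should do the same, with no reparametrisation.

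There are three smaller points. First, for $\beta\in(1,2]$ your kernel, like the paper's, has a gradient jump on the boundary sphere, so $f_t$ is not once differentiable; the paper passes over this. A single fixed $C^1$ modification would strictly increase $\|K\|_2$ and lose the exact constant, so do not try to ``recheck that the normaliser matches''. Instead take $C^1$ radial profiles $g_\epsilon$ with $g_\epsilon(0)=1$ and $|g_\epsilon'(r)|\le r^{\beta-1}$, supported in a fixed ball and converging uniformly to $(1-r^\beta/\beta)_+$. For example, set $g_\epsilon'(r)=-r^{\beta-1}\phi_\epsilon(r)$, with $\phi_\epsilon$ a continuous cutoff decreasing from $1$ to $0$ on a short interval around $r=\beta^{1/\beta}$. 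These profiles satisfy the condition at $x_0$ that the paper actually verifies. The bound is continuous in $\|K\|_2$, so letting $\epsilon\to0$ after $n\to\infty$ recovers $A_{\beta,d}$.

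Second, the map $a\mapsto a\log\lambda_j-a\log a-(1-a)\log(1-a)$ is concave, not convex. Concavity is what justifies checking only the endpoints of each subinterval.

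Third, the region near $a=0$ needs a uniform bound, not a limit. Use $A(a)\ge a^{2a}(1-a)^{2(1-a)}$, obtained from ${}_2F_1\le\sqrt{1+\lambda/m^2}$ at $(\lambda,m)=(a^3/(1-a)^3,a/(1-a))$ as in the proof of Theorem~\ref{thm:asymptotics}. This exceeds $1/1.69$ for $a\le0.07$. The paper itself only asserts both numerical constants by computation.
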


\begin{proof}[Proof of Lemma~\ref{lem:LB}]
Consider the family of distributions $(P_t)_{t\in[-1,1]}$ given by
\begin{gather*}
    Y_i\given X_i\iid N\bigl(f_t(X_i),\,\sigma^2\bigr),
    \qquad
    f_t(x) := \frac{t L h_n^\beta }{1\vee\beta}K\Bigl(\frac{x-x_0}{h_n}\Bigr),
    \label{eq:f_t}
    \\
    K(u)=\bigl(1-\|u\|_2^\beta\bigr)_+\,
    ,
    \qquad
    R(K) := \int_\R K^2(u)\,du,
    \qquad h_n := \biggl(\frac{(1\vee\beta)^2 \sigma^2 \lambda}{L^2p_X(x_0)R(K)n}\biggr)^{1/(2\beta+d)},
\end{gather*}
where $(\cdot)_+:=\max(\cdot\,,0)$. For $\beta\in(0,1]$,
\begin{equation*}
    \frac{|f_t(x)-f_t(x_0)|}{\|x-x_0\|_2^\beta}
    =
    \frac{tLh_n^\beta\bigl|K\bigl(\frac{x-x_0}{h_n}\bigr)-K(0)\bigr|}{\|x-x_0\|_2^\beta}
    \leq L,
\end{equation*}
and for $\beta\in(1,2]$,
\begin{equation*}
    \frac{\|\nabla f_t(x)-\nabla f_t(x_0)\|_2}{\|x-x_0\|_2^{\beta-1}}
    =
    \frac{tLh_n^{\beta-1}\bigl\|\nabla K\bigl(\frac{x-x_0}{h_n}\bigr)-\nabla K(0)\bigr\|_2}{\beta\|x-x_0\|_2^{\beta-1}}
    \leq L.
\end{equation*}
Therefore $f_t\in\cH(\beta,L)$. 
The Fisher information is
\begin{equation*}
    \cI(t) = \frac{L^2 n h_n^{2\beta+d}}{(1\vee\beta)^2\sigma^2} \int K^2(u)p_X(x_0+h_nu)\,du
    \to 
    \frac{L^2nh_n^{2\beta+d}R(K)p_X(x_0)}{(1\vee\beta)^2\sigma^2}=\lambda
    ,
\end{equation*}
as $n\to\infty$. 
For an arbitrary Borel measurable estimator $\hat{f}_n(x_0)$ define~$\hat{t}_n:=\frac{1\vee\beta}{Lh_n^{\beta}}\hat{f}_n(x_0)$. Then by the augmented van Trees 2 inequality~\eqref{eq:AVT} (see Example~\ref{ex:hypergeo})
\begin{align*}
    &\quad\liminf_{n\to\infty}\sup_{f\in\cH(\beta,L)}n^{2\beta/(2\beta+d)}\E_{f}\big[(\hat{f}_n(x_0)-f(x_0))^2\big]
    \\
    &\geq
    \liminf_{n\to\infty}\sup_{t\in[-1,1]}n^{2\beta/(2\beta+d)}\E_{P_t}\big[(\hat{f}_n(x_0)-f_t(x_0))^2\big]
    \\
    &\geq
    \liminf_{n\to\infty}\frac{L^2 h_n^{2\beta}n^{2\beta/(2\beta+d)}}{(1\vee\beta)^2}
    \sup_{t\in[-1,1]}\E_{P_t}\big[(\hat{t}_n-t)^2\big]
    \\
    &\geq
    \biggl(\frac{L^{d/\beta}\sigma^2}{(1\vee\beta)^{d/\beta} p_X(x_0) R(K)}\biggr)^{2\beta/(2\beta+d)}
    \sup_{\substack{\lambda>0,\\m>0}}\biggl(\frac{\lambda^{\beta/(2\beta+d)}}{(m+1)\hspace{0em}_2F_1\bigl(-\frac{1}{2},\frac{m}{2},\frac{m}{2}+1;-\frac{\lambda}{m}\bigr)}\biggr)^2,
\end{align*}
as $n\to\infty$. Now, multivariable calculus calculations gives
\begin{equation}\label{eq:R2}
    R(K)=\frac{2\pi^{d/2}\beta^2}{(\beta+d)(2\beta+d)\Gamma(1+\frac{d}{2})}.
\end{equation}
Therefore
\begin{multline*}
    \liminf_{n\to\infty}\sup_{f\in\cH(\beta,L)}n^{2\beta/(2\beta+d)}\E_{f}\big[(\hat{f}_n(x_0)-f(x_0))^2\big]
    \\
    \geq
    \biggl(\frac{d^d (\beta+d)^{2\beta}\Gamma^{2\beta}(1+d/2)}{\pi^{\beta d}\beta^{2\beta}(2\beta+d)^{d}(1\vee\beta)^{2d}}\biggr)^{1/(2\beta+d)}
    \biggl(\frac{L^{d/\beta}\sigma^2}{p_X(x_0) }\biggr)^{2\beta/(2\beta+1)}
    A\biggl(\frac{2\beta}{2\beta+d}\biggr).
\end{multline*}
Numerical computation yields $\inf_{a\in[0,4/5]}A(a)\geq\frac{1}{1.69}$, with the special case of $(\beta,d)=(2,1)$ obtained by evaluating $A(4/5)\geq\frac{1}{1.37}$. 

\end{proof}

\begin{lemma}\label{lem:UBB}
    As in the setup of Theorem~\ref{thm:all-minimax},
    \begin{equation*} 
        \limsup_{n\to\infty}\frac{\inf_{\hat{f}}\sup_{f\in\cH(\beta,L)}\E_f\Bigl[\bigl(\hat{f}(x_0)-f(x_0)\bigr)^2\Bigr]}{\bigl(\frac{d^d (\beta+d)^{2\beta}\Gamma^{2\beta}(1+d/2)}{\pi^{\beta d}\beta^{2\beta}(2\beta+d)^{d}(1\vee\beta)^{2d}}\bigr)^{1/(2\beta+d)}\bigl(\frac{L^{d/\beta}\sigma^2}{p_X(x_0)n}\bigr)^{2\beta/(2\beta+d)}}
        \leq
        1.
    \end{equation*}
\end{lemma}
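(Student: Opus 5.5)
The upper bound will come from exhibiting one explicit estimator: a linear smoother built from the same kernel $K(u)=(1-\|u\|_2^\beta)_+$ used in the proof of Lemma~\ref{lem:LB}. Its worst-case risk over $\cH(\beta,L)$ splits into a squared-bias term and a variance term, and the claim is that, after optimizing the bandwidth, the leading term equals the normalizing constant in the display.

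\textbf{Estimator.} For $\beta\in(0,1]$ I take the Nadaraya--Watson-type estimator with known design density,
\[
\hat f(x_0)=\frac{1}{n h^d p_X(x_0)\int K}\sum_i K\Bigl(\frac{X_i-x_0}{h}\Bigr)Y_i .
\]
For $\beta\in(1,2]$ I use the analogous local linear estimator, whose weights annihilate affine functions. By symmetry of $K$, to first order these weights coincide with the normalized kernel $w(u)=K(u)/\int K$.

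\textbf{Bias and variance.} Every $f\in\cH(\beta,L)$ satisfies
\[
\bigl|f(x_0+v)-f(x_0)-\ind_{\{\beta>1\}}\nabla f(x_0)^\top v\bigr|\le \frac{L\|v\|_2^\beta}{1\vee\beta}.
\]
For $\beta>1$ this follows from the mean value theorem applied to the Lipschitz-type gradient bound. Hence the absolute bias is at most
\[
\frac{Lh^\beta}{1\vee\beta}\int w(u)\|u\|_2^\beta\,du+o(h^\beta).
\]
The variance is
\[
\frac{\sigma^2\int w^2}{n h^d p_X(x_0)}\bigl(1+o(1)\bigr),
\]
using continuity of $p_X$ at $x_0$. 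The extra variance coming from $f^2(X_i)$ is handled by centering, i.e.\ subtracting $f(x_0)$ inside the sum, which makes it of smaller order.

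Minimizing $B\int w\|u\|^\beta+V\int w^2$ over weights with $\int w=1$ is a simple Lagrangian problem. Its solution is $w\propto(c-\|u\|^\beta)_+$, which after rescaling $h$ is exactly $K$; this explains the choice of kernel. With $K$ fixed, I optimize
\[
h\mapsto \frac{L^2h^{2\beta}}{(1\vee\beta)^2}\Bigl(\frac{\int K\|u\|^\beta}{\int K}\Bigr)^2+\frac{\sigma^2 R(K)}{n h^d p_X(x_0)(\int K)^2},
\]
which gives $h\asymp n^{-1/(2\beta+d)}$ and a risk equal to a constant times $\bigl(L^{d/\beta}\sigma^2/(p_X(x_0)n)\bigr)^{2\beta/(2\beta+d)}$.

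\textbf{Constant.} The remaining step is to compute the radial integrals in polar coordinates. With $\omega_d=\pi^{d/2}/\Gamma(1+d/2)$:
\begin{itemize}
\item $\int K=\omega_d\,\beta/(\beta+d)$;
\item $\int K\|u\|^\beta=\omega_d\,d\beta/\bigl((\beta+d)(2\beta+d)\bigr)$;
\item $R(K)$ is given by \eqref{eq:R2}.
\end{itemize}
Substituting these and simplifying should reproduce the displayed normalizing constant exactly, so the ratio has $\limsup\le1$.

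\textbf{Main obstacle.} The difficulty is not the rate but making the $o(1)$ terms uniform over $\cH(\beta,L)$ and over the random design. For the local linear case, the random denominator (the empirical Gram matrix) has to be replaced by its expectation with an error that is negligible in mean square. I would handle this by working on the event that the empirical moments lie within $\epsilon$ of their means, and bounding the complement by a truncation of $\hat f$ at a large level together with exponential concentration, since $nh^d\to\infty$. If that turns out awkward, a fallback is sample splitting: estimate the design moments on half the sample, which makes the weights independent of the responses.
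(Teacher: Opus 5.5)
Your constant bookkeeping is right. With $w=K/\int K$ one gets $\int w\|u\|_2^\beta=d/(2\beta+d)$ and $\int w^2=2(\beta+d)\Gamma(1+d/2)/\{\pi^{d/2}(2\beta+d)\}$, and the optimised bias--variance sum reproduces the displayed constant exactly. For $\beta>1$ you need the integral form of the Taylor remainder: the mean value theorem only gives $L\|v\|_2^\beta$ and loses the factor $1/\beta$.

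\textbf{The gap: invariance of the class.} Your construction ignores that $\cH(\beta,L)$, as defined, is invariant under $f\mapsto f+c$ for every $c\in\R$. When $\beta>1$ it is also invariant under $f\mapsto f+a^\top(\cdot-x_0)$ for every $a\in\R^d$. So $f(x_0)$ and $\nabla f(x_0)$ are unrestricted, and an estimator has finite worst-case risk only if its error does not involve them.

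Your $\beta\le1$ estimator violates this. Its weights $\ell_i=K\bigl(\frac{X_i-x_0}{h}\bigr)/\{nh^dp_X(x_0)\int K\}$ do not sum to one, so $\hat f(x_0)-f(x_0)$ contains $f(x_0)\bigl(\sum_i\ell_i-1\bigr)$. At $f\equiv c$ the risk is at least roughly $(\sigma^2+c^2)R(K)/\{nh^dp_X(x_0)(\int K)^2\}$, which is unbounded in $c$ for every $n$. ``Centering'' cannot help: the estimator cannot subtract the unknown $f(x_0)$, and in the analysis centering only moves the $f(x_0)$ contribution into exactly this term.

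You need weights that sum to one. One option is the self-normalised Nadaraya--Watson ratio, which is the paper's estimator; the paper controls its random denominator with Rosenthal moment bounds. Its $n^{-r}$ guard, however, also breaks exact reproduction of constants (e.g.\ $\hat f=0$ when the window is empty) and should be replaced by an equivariant rule. The other option is the local linear estimator for every $\beta$.

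Your switch to local linear weights for $\beta\in(1,2]$ is genuinely necessary, and there your route is sounder than the paper's. Nadaraya--Watson weights leave the term $\nabla f(x_0)^\top\sum_i\ell_i(X_i-x_0)$, which is unbounded over the class. The paper's bias step for $\beta>1$ tacitly assumes $\int K(u)u\,p_X(x_0+hu)\,du=0$.

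\textbf{Your remedies for the bad event fail for the same reason.}
\begin{itemize}
\item Truncating at a level $M$, even $M=M_n$ and even only on the bad event, leaves at least $(c-M)^2$ times the bad-event probability at $f\equiv c>M$. The worst-case risk is therefore infinite.
\item Sample splitting makes weights built from one half's Gram matrix violate $\sum_i\ell_i=1$ and $\sum_i\ell_i(X_i-x_0)=0$ on the other half. This reinstates $f(x_0)(\sum_i\ell_i-1)+\nabla f(x_0)^\top\sum_i\ell_i(X_i-x_0)$, and it does nothing about singularity of the Gram matrix.
\end{itemize}

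What works is an estimator that is \emph{exactly} equivariant under $Y_i\mapsto Y_i+c+a^\top(X_i-x_0)$ on every event. For example, use the local linear fit when the smallest eigenvalue of the empirical Gram matrix exceeds half that of its mean, and an affine-equivariant fallback otherwise. The error is then $\sum_i\ell_i\{r(X_i)+\varepsilon_i\}$ with $|r(X_i)|\le L\|X_i-x_0\|_2^\beta/(1\vee\beta)$, which is free of $f(x_0)$ and $\nabla f(x_0)$. The bad event has probability exponentially small in $nh^d$ by matrix Bernstein. It therefore only has to beat polynomial moments of $\sum_i\ell_i^2$ and $\sum_i|\ell_i|\|X_i-x_0\|_2^\beta$ under the fallback.

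This last step needs a mild tail condition on $P_X$ that the stated setup lacks. For $\beta\le1$, the functions $x\mapsto\min(L\|x-x_0\|_2^\beta,Lr^\beta)$ and $x\mapsto Lr^\beta$ both lie in $\cH(\beta,L)$. They differ by $Lr^\beta$ at $x_0$ and agree outside the ball of radius $r$ about $x_0$. So every estimator has worst-case risk at least $\tfrac14L^2r^{2\beta}P(\|X-x_0\|_2\ge r)^n$. This is infinite for every $n$ if, say, $P(\|X-x_0\|_2\ge r)\ge1/\log r$ for large $r$.
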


\begin{proof}

We consider the local constant (Nadaraya--Watson) estimator
\begin{equation*}
    \hat{f}_{n,h}(x_0) := \frac{\frac{1}{n}\sum_{i=1}^nK_h(X_i-x_0)Y_i}{\frac{1}{n}\sum_{i=1}^nK_h(X_i-x_0)+n^{-r}},
\end{equation*}
with bandwidth an kernel
\begin{equation}\label{eq:h-K}
    h := \biggl(\frac{d (1\vee\beta)^2\sigma^2(x_0) R(K)}{2\beta L^2\mu_\beta^2(K)p_X(x_0)n}\biggr)^{1/(2\beta+d)},
    \qquad
    K(u) := \frac{(\beta+d)\Gamma(1+d/2)}{\pi^{d/2}\beta}\bigl(1-\|u\|_2^\beta\bigr)_+,
\end{equation}
and where $R(K):=\int_\R K^2(u)\,du$, $\mu_\beta(K):=\int_\R K(u)\|u\|_2^\beta \,du$, $K_h(\cdot):=K(\cdot/h)/h^d$, and $r>0$ is an arbitrarily large positive constant. Note the additional $n^{-r}$ term in the denominator enforces the Nadaraya-Watson estimator is well defined on the event $\frac{1}{n}\sum_{i=1}^nK_h(X_i-x_0)=0$. 

We proceed to show that
\begin{equation*}
    \lim_{n\to\infty}\sup_{f\in\cH(\beta,L)}\E_f\Bigl[\bigl(\hat{f}_{n,h}(x_0)-f(x_0)\bigr)^2\Bigr]
    \leq
    \biggl(\frac{d^d (\beta+d)^{2\beta}\Gamma^{2\beta}(1+d/2)}{\pi^{\beta d}\beta^{2\beta}(2\beta+d)^{d}(1\vee\beta)^{2d}}\biggr)^{1/(2\beta+d)}\biggl(\frac{L^{d/\beta}\sigma^2}{p_X(x_0)n}\biggr)^{2\beta/(2\beta+d)}
    .
\end{equation*}

Define $e(x):=f(x)-f(x_0)$ and $\sigma^2(x):=\E(\varepsilon^2\given X=x)$. Then
\begin{equation*}
    \E_f\bigl[(\hat{f}_{n,h}(x_0)-f(x_0))^2\bigr]
    =
    \E\Biggl[\biggl(\frac{\frac{1}{n}\sum_{i=1}^nK_h(X_i-x_0)e(X_i)}{\frac{1}{n}\sum_{i=1}^nK_h(X_i-x_0)+n^{-r}}\biggr)^2\,\Biggr]
    +
    \frac{1}{n}\E\Biggl[\frac{\frac{1}{n}\sum_{i=1}^nK_h^2(X_i-x_0)\sigma^2(X_i)}{\bigl(\frac{1}{n}\sum_{i=1}^nK_h(X_i-x_0)+n^{-r}\bigr)^2}\Biggr].
\end{equation*}
Define $D_n:=\frac{1}{n\,p_X(x_0)}\sum_{i=1}^nK_h(X_i-x_0)+n^{-r}$ and $D:=\frac{1}{p_X(x_0)}\E[K_h(X-x_0)]+n^{-r}=\E(D_n)$.  Then
\begin{align*}
    \frac{1}{n}\E\biggl[\frac{\frac{1}{n}\sum_{i=1}^nK_h^2(X_i-x_0)\sigma^2(X_i)}{D_n^2}\biggr]
    &=
    \frac{1}{nD^2}\E\biggl[\Bigl(\frac{1}{n}\sum_{i=1}^nK_h^2(X_i-x_0)\sigma^2(X_i)\Bigr)\biggr]
    \\
    &\qquad +
    \frac{1}{n}\E\biggl[\biggl(\frac{1}{D_n^2}-\frac{1}{D^2}\biggr)\Bigl(\frac{1}{n}\sum_{i=1}^nK_h^2(X_i-x_0)\sigma^2(X_i)\Bigr)\biggr]
    \\
    &=
    \frac{1}{nD^2}\,\E\bigl[K_h^2(X-x_0)\sigma^2(X)\bigr]
    \\
    &\qquad
    -
    \frac{1}{D^2}\E\biggl[\biggl(\frac{D_n^2-D^2}{D_n^2}\biggr)\Bigl(\frac{1}{n^2}\sum_{i=1}^nK_h^2(X_i-x_0)\sigma^2(X_i)\Bigr)\biggr].
\end{align*}
For the second term,
\begin{align*}
    \E\biggl[\biggl|\biggl(\frac{D_n^2-D^2}{D^2D_n^2}\biggr)\Bigl(&\frac{1}{n^2}\sum_{i=1}^nK_h^2(X_i-x_0)\sigma^2(X_i)\Bigr)\biggr|\biggr]
    \leq
    \frac{1}{D^2}
    \E\biggl[\biggl(\frac{1}{n^2}\sum_{i=1}^nK_h^2(X_i-x_0)\sigma^2(X_i)\biggr)^2\,\biggr]^{1/2}
    \\
    &\leq
    \frac{\sigma_\infty^2}{D^2}
    \E\biggl[\biggl(\frac{D_n^2-D^2}{D_n^2}\biggr)^2\biggr]^{1/2}
    \biggl\{\frac{n-1}{n^3}\bigl(\E[K_h^2(X-x_0)]\bigr)^2+\frac{1}{n^3}\E[K_h^4(X-x_0)]\biggr\}^{1/2}
    \\
    &\leq
    \E\biggl[\biggl(\frac{D_n^2-D^2}{D_n^2}\biggr)^2\biggr]^{1/2}
    O\biggl(\frac{1}{nh^d}\biggr).
\end{align*}
Finally, define $E_p:=\E\bigl[|D_n-D|^p\bigr]$. 
Then, for any $q>0$,
\begin{align*}
    \E\biggl[\biggl(\frac{D_n^2-D^2}{D_n^2}\biggr)^2\biggr]
    &\leq
    n^{4r}\E\bigl[(D_n^2-D^2)^2\ind_{(|D_n-D|>D/2)}\bigr]
    +
    \frac{2^4}{D^4}\E\bigl[(D_n^2-D^2)^2\ind_{(|D_n-D|\leq D/2)}\bigr]
    \\
    &\leq
    \frac{2^q n^{4r}}{D^q}\E\bigl[(D_n+D)^2|D_n-D|^{q+2}\bigr]
    +
    \frac{2^4}{D^4}\E\bigl[(D_n+D)^2(D_n-D)^2\bigr]
    \\
    &\leq
    \frac{2^{q+1}}{D^q} n^{4r}E_{q+4}
    +\frac{2^{q+3}}{D^{q-2}} n^{4r}E_{q+2}
    + \frac{2^5}{D^4}E_4 + \frac{2^7}{D^2}E_2,
\end{align*}
with the final inequality following because $(x+y)^2\leq 2(x-y)^2+8y^2$ for all $x,y\in\R$. 
By Rosenthal's inequality, for any $p\geq 2$, there exists a constant $C_p$ that depends only on $p$ such that
\begin{equation*}
    E_p \leq C_p \biggl(\frac{\E(|K_h(X-x_0)|^p)}{n^{p-1}} + \frac{\{\Var(K_h(X-x_0))\}^{p/2}}{n^{p/2}}\biggr)
    = O\biggl(\frac{1}{(nh^d)^{p/2}}\biggr).
\end{equation*}
Therefore, for any $q>0$, 
\begin{equation*}
    \E\biggl[\biggl(\frac{D_n^2-D^2}{D_n^2}\biggr)^2\biggr]
    =
    O\biggl(\frac{1}{nh^d}+\frac{n^{4r}}{(nh^d)^{1+q/2}}\biggr) 
    =
    O\biggl(n^{-2\beta/(2\beta+d)} + n^{4r-\beta(q+2)/(2\beta+d)}\biggr).
\end{equation*}
Therefore taking $q>4(2+d/\beta)r-2$ we obtain
\begin{equation*}
    \E\biggl[\biggl(\frac{D_n^2-D^2}{D_n^2}\biggr)^2\biggr] = o(1),
\end{equation*}
thus
\begin{equation*}
    \E\biggl[\biggl(\frac{D_n^2-D^2}{D^2D_n^2}\biggr)\Bigl(\frac{1}{n^2}\sum_{i=1}^nK_h^2(X_i-x_0)\sigma^2(X_i)\Bigr)\biggr]=o\biggl(\frac{1}{nh^d}\biggr).
\end{equation*}
By analogous arguments,
\begin{equation*}
    \E\Biggl[\frac{\bigl(\frac{1}{n}\sum_{i=1}^nK_h(X_i-x_0)e(X_i)\bigr)^2}{D_n^2}\,\Biggr]
    =
   o\bigl(h^{2\beta}\bigr).
\end{equation*}
Moreover,
\begin{align*}
    \E\biggl[\biggl(\frac{1}{n}\sum_{i=1}^nK_h(X_i-x_0)e(X_i)\biggr)^2\,\biggr]
    &=
    \frac{n-1}{n}\bigl\{\E\bigl[K_h(X-x_0)e(X)\bigr]\bigr\}^2
    +
    \frac{1}{n}\E\bigl[K_h^2(X-x_0)e^2(X)\bigr]
    \\
    &=
    \bigl\{\E\bigl[K_h(X-x_0)e(X)\bigr]\bigr\}^2 + o\biggl(h^{2\beta}+\frac{1}{nh^d}\biggr).
\end{align*}
Therefore
\begin{equation}\label{eq:step1}
    \E_f\bigl[(\hat{f}_{n,h}(x_0)-f(x_0))^2\bigr]
    =
    \frac{1}{nD^2}\E\bigl[K_h^2(X-x_0)\sigma^2(X)\bigr] + \frac{1}{D^2}\bigl\{\E\bigl[K_h(X-x_0)e(X)\bigr]\bigr\}^2 + o\biggl(h^{2\beta}+\frac{1}{nh^d}\biggr).
\end{equation}
Now, by absolute continuity of $p_X$,
\begin{equation*}
    D = \frac{1}{p_X(x_0)}\E[K_h(X-x_0)] = \frac{1}{p_X(x_0)}\int_\R K(u)p_X(x_0+hu)\,du
    = 1+o(1).
\end{equation*}
Additionally with absolute continuity of $\sigma^2$,
\begin{equation*}
    \E\bigl[K_h^2(X-x_0)\sigma^2(X)\bigr]
    =
    \frac{1}{h^d}\int_\R K^2(u)\sigma^2(x_0+hu)p_X(x_0+hu)\,du
    =
    \frac{R(K)p_X(x_0)\sigma^2(x_0)}{h^d}(1+o(1)).
\end{equation*}
When $\beta\in(0,1]$,
\begin{align*}
    \bigl|\E\bigl[K_h(X-x_0)e(X)\bigr]\bigr|
    \leq
    &\int_{\R^d} K(u) \bigl|f(x_0+hu)-f(x_0)\bigr|p_X(x_0+hu)du
    \\
    &\leq
    Lh^\beta\int_{\R^d} K(u)\|u\|_2^\beta p_X(x_0+hu)\,du
    = p_X(x_0)Lh^\beta\mu_\beta(K)(1+o(1)).
\end{align*}
On the other hand, when $\beta\in(1,2]$,
\begin{align*}
    \bigl|\E\bigl[K_h(X-x_0)&e(X)\bigr]\bigr|
    =
    \biggl|\int_{\R^d} K(u) \bigl(f(x_0+hu)-f(x_0)\bigr)p_X(x_0+hu)\,du\biggr|
    \\
    &= h\biggl|\int_{\R^d}\int_0^1 K(u) u^\top\bigl(\nabla f(x_0+thu)-\nabla f(x_0)\bigr)\,dt\,p_X(x_0+hu)\,du\biggr|
    \\
    &\leq
    h\int_{\R^d}\int_0^1 t\, K(u) \|u\|_2 \|\nabla f(x_0+thu)-\nabla f(x_0)\|_2\,dt\,p_X(x_0+hu)\,du
    \\
    &\leq
    Lh^{\beta} \int_0^1 t^{\beta-1} dt \int_{\R^d} K(u) \|u\|_2^\beta p_X(x_0+hu) \,du
    = \frac{p_X(x_0)Lh^{\beta}\mu_\beta(K)}{\beta}(1+o(1)) ,
\end{align*}
obtained by taking a Taylor expansion with integral form of remainder. 
Combining~\eqref{eq:step1} with the above,
\begin{equation*}
    \E_f\bigl[(\hat{f}_{n,h}(x_0)-f(x_0))^2\bigr]
    =
    \frac{L^2h^{2\beta}\mu_\beta^2(K)}{(1\vee\beta)^2}
    +
    \frac{R(K)\sigma^2(x_0)} {p_X(x_0)\,nh^d}
    +\biggl(h^{2\beta}+\frac{1}{nh^d}\biggr).
\end{equation*}
Recalling~\eqref{eq:h-K} and noting $R(K)=\frac{2(\beta+d)\Gamma(1+d/2)}{\pi^{d/2}(2\beta+d)}$, and $\mu_\beta(K)=\frac{d}{2\beta+d}$,  
gives the required result;
\begin{align}
    \sup_{f\in\cH(\beta,L)}\E_{f}\bigl[(\hat{f}_{n,h}(x_0)&-f(x_0))^2\bigr]  
    \notag
    \\
    &=
    \biggl(\frac{d^d (\beta+d)^{2\beta}\Gamma^{2\beta}(1+d/2)}{\pi^{\beta d}\beta^{2\beta}(2\beta+d)^{d}(1\vee\beta)^{2d}}\biggr)^{1/(2\beta+d)}
    \biggl(\frac{L^{d/\beta}\sigma^2(x_0)}{p_X(x_0) n}\biggr)^{2\beta/(2\beta+d)}    
    (1+o(1)).
    \label{eq:HD-UB}
\end{align}
\end{proof}

\subsection{Proof of Theorem~\ref{thm:asymptotics}}

We first prove a finite sample lower bound on the pointwise mean squared error. 

\begin{lemma}[Finite sample lower bound for H\"older function estimation]\label{lem:finite}
    Consider the model~\eqref{eq:Y=f(X)+e} with $p_X(\cdot)/p_X(x_0)\in\cH(\alpha,L_X)$ for some $\alpha\in(0,1]$, $L_X>0$. Then for any $n\in\N, n\geq3$, $d\in\N$, $L>0$, $\sigma^2>0$, $x_0\in\R^d$,
    \begin{multline}
    \sup_{f\in\cH(\beta,L)}\E_{f}\big[(\hat{f}_n(x_0)-f(x_0))^2\big]
    \\\geq
    \underbar{A}\Bigl(\frac{2\beta}{2\beta+d}\Bigr)\biggl(\frac{1}{1+c_{n,d}}\biggr)\biggl(\frac{d^d (\beta+d)^{2\beta}\Gamma^{2\beta}(1+d/2)}{\pi^{\beta d}\beta^{2\beta}(2\beta+d)^{d}(1\vee\beta)^{2d}}\biggr)^{1/(2\beta+d)}
    \biggl(\frac{L^{d/\beta}\sigma^2}{p_X(x_0) n}\biggr)^{2\beta/(2\beta+1)},
\end{multline}
    where
    \begin{equation*}
        c_{n,d}:=
        L_X
        \biggl(\frac{(\beta+d)(2\beta+d)\Gamma(1+d/2)\sigma^2}{2\pi^{d/2}\beta^2p_X(x_0)L^2\hspace{0.02cm}n}\biggr)^{\alpha/(2\beta+d)},
    \end{equation*}
    and
    \begin{equation*}
        \underbar{A}(a) := \frac{1}{a^a(1-a)^{1-a}}\sup_{\substack{0<\lambda\leq 1\\m>0}}\frac{\lambda^a}{\bigl\{(m+1)_2F_1\bigl(-\frac{1}{2},\frac{m}{2},\frac{m}{2}+1;-\frac{\lambda}{m^2}\bigr)\bigr\}^2}.
    \end{equation*}
\end{lemma}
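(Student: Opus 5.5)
We follow the construction in the proof of Lemma~\ref{lem:LB}, now tracking finite-$n$ errors instead of passing to the limit. Fix $\lambda\in(0,1]$ and $m>0$. Take the submodel $(P_t)_{t\in[-1,1]}$ with $f_t(x)=\frac{tLh_n^\beta}{1\vee\beta}K\bigl(\frac{x-x_0}{h_n}\bigr)$, where $K(u)=(1-\|u\|_2^\beta)_+$, and choose $h_n$ exactly as in Lemma~\ref{lem:LB}, namely $h_n^{2\beta+d}=\frac{(1\vee\beta)^2\sigma^2\lambda}{L^2p_X(x_0)R(K)n}$, with $R(K)$ given by~\eqref{eq:R2}. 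The verification that $f_t\in\cH(\beta,L)$ carries over unchanged. Since the errors are Gaussian and $f_t$ is linear in $t$, the Fisher information does not depend on $t$:
\[
\cI=\frac{L^2nh_n^{2\beta}}{(1\vee\beta)^2\sigma^2}\int K^2\bigl(\tfrac{x-x_0}{h_n}\bigr)p_X(x)\,dx=\frac{\lambda}{R(K)}\int K^2(u)\frac{p_X(x_0+h_nu)}{p_X(x_0)}\,du .
\]

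The key new step is an explicit upper bound on $\cI$. By the H\"older assumption on $p_X/p_X(x_0)$ with $\alpha\le1$, we have $p_X(x_0+h_nu)/p_X(x_0)\le 1+L_Xh_n^\alpha\|u\|_2^\alpha\le 1+L_Xh_n^\alpha$ on $\mathrm{supp}\,K\subseteq\mathcal{B}_0(1)$. Hence $\cI\le\lambda(1+L_Xh_n^\alpha)$. Substituting~\eqref{eq:R2} and using $\lambda\le1$ gives $L_Xh_n^\alpha\le c_{n,d}$; this is precisely why $\underbar{A}$ restricts to $\lambda\le1$. The factor $(1\vee\beta)^2$ inside $h_n$ should either cancel against the constants or be absorbed by the monotonicity $(1\vee\beta)\ge1$, and I would check that the exponent matches the stated $c_{n,d}$. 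Next, the AVT2 bound of Example~\ref{ex:hypergeo} is decreasing in $\cI$, since the integrand $\sqrt{\cI\alpha^2+(\alpha')^2}$ is increasing in $\cI$. So we may replace $\cI$ by $\lambda(1+c_{n,d})$. Then $\sup_t\E_{P_t}(\hat t_n-t)^2\ge \bigl[(m+1)\,{}_2F_1(-\tfrac12,\tfrac m2,\tfrac m2+1;-\tfrac{\lambda(1+c_{n,d})}{m^2})\bigr]^{-2}$, where $\hat t_n:=\frac{1\vee\beta}{Lh_n^\beta}\hat f_n(x_0)$.

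To remove the factor $(1+c_{n,d})$ from the hypergeometric argument, reparametrize with $\lambda'=\lambda(1+c_{n,d})$, so that $h_n^{2\beta}\propto\lambda^{2\beta/(2\beta+d)}=\lambda'^{a}(1+c_{n,d})^{-a}$ with $a=\frac{2\beta}{2\beta+d}$. Since $a<1$ and $c_{n,d}\ge0$, we have $(1+c)^{-a}\ge(1+c)^{-1}$, which yields the factor $\frac1{1+c_{n,d}}$. The supremum range of $\lambda'$ must then be handled: either bound with $\lambda\le1$ directly and accept the small mismatch, or observe that the supremum over $\lambda\le1$ of the product is attained by the same scaling. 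Multiplying out as in Lemma~\ref{lem:LB} gives $\frac{L^2h_n^{2\beta}}{(1\vee\beta)^2}$ times the AVT2 bound. After inserting $R(K)$, this recovers the displayed constant times $\lambda^a/\{(m+1){}_2F_1\}^2$, and the prefactor $a^{-a}(1-a)^{-(1-a)}$ appears through the same normalization as in $A(a)$. Finally, take the supremum over $\lambda\in(0,1]$ and $m>0$.

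The main obstacle is the bookkeeping step: matching the constant in $c_{n,d}$ and the normalization $a^{-a}(1-a)^{-(1-a)}$ exactly, and justifying the passage from $\lambda(1+c)$ back to $\lambda$ within the restricted range $\lambda\le1$. The monotonicity of the AVT2 bound in $\cI$ and the H\"older bound on the design density are routine. The condition $n\ge3$ is presumably only needed to keep $c_{n,d}$ finite or small, and we would not rely on it otherwise.
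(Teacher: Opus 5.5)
Your plan follows the paper's proof. Both use the same submodel $f_t$. Both bound $p_X(x_0+h_nu)/p_X(x_0)\le 1+L_Xh_n^\alpha$ on the unit ball to get $\cI\le\lambda(1+L_Xh_n^\alpha)$, use that the AVT2 bound is decreasing in $\cI$, and finish with $(1+c)^{-a}\ge(1+c)^{-1}$, where $a=2\beta/(2\beta+d)$. The one cosmetic difference is that the paper deflates the bandwidth up front. It takes $h_n^{2\beta+d}\propto\lambda/(1+\phi_n)$ with $\phi_n:=L_X\{(1\vee\beta)^2\sigma^2\lambda/(L^2p_X(x_0)R_{\beta,d}n)\}^{\alpha/(2\beta+d)}\ge L_Xh_n^\alpha$, so that $\cI\le\lambda$ directly. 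Your after-the-fact reparametrization is equivalent.

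Two of the obstacles you list dissolve:
\begin{itemize}
\item The range of $\lambda$ works in your favour. For any target $\lambda'\in(0,1]$, run the construction with $\lambda=\lambda'/(1+c)\le1$.
\item The normalization is an identity: $a^a(1-a)^{1-a}=\{(2\beta)^{2\beta}d^d/(2\beta+d)^{2\beta+d}\}^{1/(2\beta+d)}$. Multiplying it by $(1\vee\beta)^{2a-2}R_{\beta,d}^{-a}$, which is what $L^2h_n^{2\beta}/(1\vee\beta)^2$ produces, gives exactly the displayed constant. The last factor comes out with exponent $2\beta/(2\beta+d)$; the $2\beta+1$ in the statement is a typo.
\end{itemize}

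The third obstacle does not dissolve. You have $L_Xh_n^\alpha=c_{n,d}\{(1\vee\beta)^2\lambda\}^{\alpha/(2\beta+d)}$, so the factor $(1\vee\beta)^2$ does not cancel. Appealing to $(1\vee\beta)\ge1$ pushes the wrong way: for $\beta\in(1,2]$ and $\lambda>\beta^{-2}$ you get $L_Xh_n^\alpha>c_{n,d}$. So your step ``$\lambda\le1$ gives $L_Xh_n^\alpha\le c_{n,d}$'' is false.

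The paper's proof makes the same slip. Its $\phi_n$ carries $(1\vee\beta)^2$, yet it is silently replaced by $c_{n,d}$ in the final display. For $\beta>1$, what both arguments actually prove is the bound with $c_{n,d}$ replaced by $(1\vee\beta)^{2\alpha/(2\beta+d)}c_{n,d}$. This does not matter for Theorem~\ref{thm:asymptotics}, where that factor is bounded and $c_{n,d}\to0$.

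To get the lemma exactly as stated, you must spend slack you left unused:
\begin{itemize}
\item Use $(1+x)^a\le1+ax$ instead of $(1+x)^{-a}\ge(1+x)^{-1}$. This suffices whenever $a(1\vee\beta)^{2\alpha/(2\beta+d)}\le1$, which holds for all $d\ge2$.
\item In the remaining cases ($d=1$, $\beta$ close to $2$), also use the sharper estimate $\cI\le\lambda\{1+L_Xh_n^\alpha\int K^2(u)\|u\|_2^\alpha\,du/R_{\beta,d}\}$.
\end{itemize}
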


\begin{proof}[Proof of Lemma~\ref{lem:finite}]

The proof follows similar arguments to that of Lemma~\ref{lem:LB}. Consider the family of distributions $(P_t)_{t\in[-1,1]}$ given by
\begin{gather*}
    Y_i\given X_i\iid N\bigl(f_t(X_i),\,\sigma^2\bigr),
    \qquad
    f_t(x) := \frac{t L h_n^\beta }{1\vee\beta}K\Bigl(\frac{x-x_0}{h_n}\Bigr),
    \qquad
    K(u)=\bigl(1-\|u\|_2^\beta\bigr)_+\,
    ,
    \\
    R_{\beta,d} := \frac{2\pi^{d/2}\beta^2}{(\beta+d)(2\beta+d)\Gamma(1+d/2)},
    \qquad h_n := \biggl(\frac{(1\vee\beta)^2 \sigma^2 \lambda}{L^2p_X(x_0)R_{\beta,d}(1+\phi_n)n}\biggr)^{1/(2\beta+d)},
    \\
    \phi_n := 
    L_X
    \biggl(\frac{(1\vee\beta)^2\sigma^2\lambda}{L^2p_X(x_0)R_{\beta,d}n}\biggr)^{\alpha/(2\beta+d)},
\end{gather*}
for arbitrary $\lambda\in(0,1]$. 
As in the proof of Lemma~\ref{lem:LB}$, f_t\in\cH(\beta,L)$. 
The Fisher information is
\begin{equation*}
    \cI(t) 
    \leq
    \frac{L^2 n h_n^{2\beta+d}R_{\beta,d}\hspace{0.02cm}p_X(x_0)}{(1\vee\beta)^2\sigma^2}
    \bigl(1+h_n^{\alpha}L_X\bigr)
    =\frac{1+h_n^\alpha L_X}{1+\phi_n}\hspace{0.02cm}\lambda 
    \leq \lambda
    .
\end{equation*}
Again taking~$\hat{t}_n:=\frac{1\vee\beta}{Lh_n^{\beta}}\hat{f}_n(x_0)$,
\begin{align*}
    &\quad\sup_{f\in\cH(\beta,L)}\E_{f}\big[(\hat{f}_n(x_0)-f(x_0))^2\big]
    \\
    &\geq
    \frac{1}{(1\vee\beta)^2}\biggl(\frac{(1\vee\beta)^2 L^{d/\beta} \sigma^2}{p_X(x_0) R_{\beta,d} n }\cdot\frac{1}{1+c_{n,d}}\biggr)^{2\beta/(2\beta+d)}
    \biggl(\frac{(2\beta)^{2\beta}d^d}{(2\beta+d)^{2\beta+d}}\biggr)^{1/(2\beta+d)}\underbar{A}\biggl(\frac{2\beta}{2\beta+d}\biggr).
\end{align*}
The result follows.

\end{proof}

Theorem~\ref{thm:asymptotics} follows from Lemma~\ref{lem:finite}, as outlined below.

\begin{proof}[Proof of Theorem~\ref{thm:asymptotics}]
Under the assumed asymptotic regime,
\begin{align*}
    p_X(x_0)L^2R_{\beta,d}
    \geq
    \frac{L^2R_{\beta,d}}{C\,\mathrm{vol}\,\mathcal{B}_1(0)}
    =
    \frac{2\beta^2L^2}{C(\beta+d)(2\beta+d)}
    \sim 
    \frac{2\beta^2L^2}{Cd^2},
\end{align*}
and so
\begin{equation}
    c_{n,d}\sim L_X L^{-\frac{2\alpha}{d}} n^{-\frac{\alpha}{d}}
    =
    L_X e^{-2\alpha\frac{\log L}{d}} e^{-\alpha\frac{\log n}{d}} \to 0.
\end{equation}
Moreover, for $a\in(0,1/2)$,
\begin{align*}
        \underbar{A}(a)
        =
        \frac{1}{a^a(1-a)^{1-a}}\sup_{\substack{0<\lambda\leq 1\\m>0}}&\frac{4\lambda^a}{(m+1)^2 \bigl(\int_0^1 t^{\frac{m}{2}-1}\sqrt{m^2+\lambda t}\,dt\bigr)^2}
        \\
        &\geq
        \frac{1}{a^a(1-a)^{1-a}}\sup_{\substack{0<\lambda\leq 1\\m>0}}\frac{m^2\lambda^a}{(m+1)^2 (m^2+\lambda)} = a^{2a}(1-a)^{2(1-a)},
\end{align*}
with supremum attained at $(\lambda,m)=\bigl(\frac{a^3}{(1-a)^3}, \frac{a}{1-a}\bigr)$. Therefore, for $\beta\in(0,2]$, $d\geq 2$,
\begin{equation*}
    \underbar{A}\biggl(\frac{2\beta}{2\beta+d}\biggr) \geq
    \biggl(\frac{(2\beta)^{2\beta}d^d}{(2\beta+d)^{2\beta+d}}\biggr)^{2/(2\beta+d)} \sim 1,
\end{equation*}
as $d\to\infty$. 
As in the proof of Lemma~\ref{lem:UBB}, the asymptotic arguments also follow in the high-dimensional regime, and so~\eqref{eq:HD-UB} continues to hold. Therefore,
\begin{equation*}
        \lim_{n\to\infty}\frac{\inf_{\hat{f}}\sup_{f\in\cH(\beta,L)}\E_f\Bigl[\bigl(\hat{f}(x_0)-f(x_0)\bigr)^2\Bigr]}{\bigl(\frac{d^d (\beta+d)^{2\beta}\Gamma^{2\beta}(1+d/2)}{\pi^{\beta d}\beta^{2\beta}(2\beta+d)^{d}(1\vee\beta)^{2d}}\bigr)^{1/(2\beta+d)}\bigl(\frac{L^{d/\beta}\sigma^2}{p_X(x_0)n}\bigr)^{2\beta/(2\beta+d)}}
        =
        1,
    \end{equation*}
Applying Stirling's approximation,
\begin{equation*}
    \Gamma^{2\beta/(2\beta+d)}(1+d/2) \sim \biggl\{\sqrt{\pi d}\biggl(\frac{d}{2e}\biggr)^{d/2}\biggr\}^{2\beta/d}
    \sim \pi^{\beta/d}\bigl(d^{1/d}\bigr)^{\beta}\biggl(\frac{d}{2e}\biggr)^{\beta}
    \sim \biggl(\frac{d}{2e}\biggr)^{\beta}.
\end{equation*}
Therefore
\begin{equation*}
    \biggl(\frac{d^d (\beta+d)^{2\beta}\Gamma^{2\beta}(1+d/2)}{\pi^{\beta d}\beta^{2\beta}(2\beta+d)^{d}(1\vee\beta)^{2d}}\biggr)^{1/(2\beta+d)}
    \sim
    \frac{d^{2\beta/d}}{\pi^\beta (1\vee\beta)^2}\biggl(\frac{d}{2e}\biggr)^{\beta}
    \sim
    \frac{d^\beta}{(2\pi e)^\beta (1\vee\beta)^2},
\end{equation*}
from which the result follows.

\end{proof}

\subsection{Proof of Theorem~\ref{thm:AGVT}}

\begin{proof}[Proof of Theorem~\ref{thm:AGVT}]
    We generalize the proof of Theorem~\ref{thm:gvt}. Define
    \begin{equation*}
        \zeta(x,t) := \frac{\partial_t\bigl(p(x,t)\alpha(t)\bigr)}{p(x,t)\mu(t)}.
    \end{equation*}
    Consider the matrix
    \begin{align*}
        &\quad
        \begin{pmatrix}
            \int_T\int_{\cX}\bigl(\hat\psi(x)-\phi(t)\bigr)\bigl(\hat\psi(x)-\phi(t)\bigr)^\top p(x,t)\mu(t)\,d\nu(x)\,dt
            &
            \int_T\int_{\cX}\bigl(\hat\psi(x)-\phi(t)\bigr)\zeta(x,t)^\top p(x,t)\mu(t)\,d\nu(x)\,dt
            \\
            \int_T\int_{\cX}\zeta(x,t)\bigl(\hat\psi(x)-\phi(t)\bigr)^\top p(x,t)\mu(t)\,d\nu(x)\,dt
            &
            \int_T\int_{\cX}\zeta(x,t)\zeta(x,t)^\top p(x,t)\mu(t)\,d\nu(x)\,dt
        \end{pmatrix}
        \\
        &=
        \int_T\int_{\cX}
        \begin{pmatrix}
            \hat\psi(x)-\phi(t)
            \\
            \zeta(x,t)
        \end{pmatrix}
        \begin{pmatrix}
            \hat\psi(x)-\phi(t)
            \\
            \zeta(x,t)
        \end{pmatrix}^\top
        p(x,t)\mu(t)\,d\nu(x)\,dt
        \succcurlyeq 0,
    \end{align*}
    where $\succcurlyeq$ denotes the Loewner order. 
    Then, 
    \begin{align*}
        &\quad
        \int_T\int_{\cX}\bigl(\hat\psi(x)-\phi(t)\bigr)\bigl(\hat\psi(x)-\phi(t)\bigr)^\top p(x,t)\mu(t) \,d\nu(x)\,dt
        \\
        &\succcurlyeq
        \biggl(\int_T\int_{\cX}\zeta(x,t)\bigl(\hat\psi(x)-\phi(t)\bigr)^\top p(x,t)\mu(t)\,d\nu(x)\,dt\bigg)^\top
        \biggl(\int_T\int_{\cX}\zeta(x,t)\zeta(x,t)^\top p(x,t)\mu(t)\,d\nu(x)\,dt\biggr)^{-1}
        \\
        &\qquad
        \cdot\biggl(\int_T\int_{\cX}\zeta(x,t)\bigl(\hat\psi(x)-\phi(t)\bigr)^\top p(x,t)\mu(t)\,d\nu(x)\,dt\bigg).
    \end{align*}
    First, as $\alpha(t)=0$ on $\partial T$ the boundary of $T$,
    \begin{equation*}
        \int_T \zeta(x,t) p(x,t)\mu(t) \,dt = \int_T\partial_t\bigl(p(x,t)\alpha(t)\bigr)\,dt = 0
    \end{equation*}
    for $\nu$-almost all $x$. Thus
    \begin{equation}\label{eq:eq1}
        \int_T\int_{\cX}\hat\psi(x)p(x,t)\mu(t)\,d\nu(x)\,dt=0.
    \end{equation}
    By integration by parts
    \begin{equation*}
        \int_T \phi(t) \partial_t\bigl(p(x,t)\alpha(t)\bigr) \,dt
        =
        -\int_T p(x,t)\alpha(t)\nabla\phi(t)\,dt,
    \end{equation*}
    and so
    \begin{equation}\label{eq:eq2}
        \int_T\int_{\cX}\phi(t)\zeta(x,t)p(x,t)\mu(t)\,d\nu(x)\,dt
        =
        -\int_T  \alpha(t)\nabla\phi(t)\,dt.
    \end{equation}    
    Now,
    \begin{align}
        &\quad
        \int_T\int_{\cX}\zeta(x,t)\zeta(x,t)^\top p(x,t)\mu(t)\,d\nu(x)\,dt
        \notag
        \\
        &=
        \int_T\int_{\cX}\frac{\bigl(\partial_tp(x,t)\alpha(t)+p(x,t)\nabla \alpha(t)\bigr) \bigl(\partial_tp(x,t)\alpha(t)+p(x,t)\nabla \alpha(t)\bigr)^\top}{p(x,t)\mu(t)}\,d\nu(x)\,dt
        \notag
        \\
        &=
        \int_T\int_{\cX}\biggl(
        \frac{\partial_tp(x,t)\,\partial_tp(x,t)^\top}{p(x,t)}\frac{\alpha^2(t)}{\mu(t)}
        +
        \frac{\nabla \alpha(t)\nabla \alpha(t)^\top}{\mu(t)}p(x,t)
        \biggr)
        \,d\nu(x)\,dt
        \notag
        \\
        &=\int_T\int_{\cX}
        \biggl(\cI(t)\frac{\alpha^2(t)}{\mu(t)}
        +
        \frac{\nabla \alpha(t) \nabla \alpha(t)^\top}{\mu(t)}
        \biggr)\,dt
        ,
        \label{eq:eq3}
    \end{align}
    where in the second equality the cross terms are zero. By applying Jensen's inequality
    \begin{equation*}
        \int_{\cX}\|\partial_tp(x,t)\|_2\,d\nu(x) \leq \|\cI(t)\|_{\mathrm{op}}^{1/2} < \infty,
    \end{equation*}
    and so applying dominated convergence
    \begin{multline*}
        \int_{\cX} \frac{\bigl(\partial_tp(x,t)\,\alpha(t)\bigr)\bigl(p(x,t)\nabla \alpha(t)^\top\bigr)}{p(x,t)\mu(t)}\,d\nu(x)
        =
        \frac{\alpha(t)}{\mu(t)}\int_{\cX}\partial_tp(x,t)\,d\nu(x)\nabla \alpha(t)^\top
        \\
        =
        \frac{\alpha(t)}{\mu(t)}\biggl(\partial_t\int_{\cX}p(x,t)\,d\nu(x)\biggr)\nabla \alpha(t)^\top
        =0.
    \end{multline*}
    Combining~\eqref{eq:eq1},~\eqref{eq:eq2}~and~\eqref{eq:eq3} gives 
    \begin{multline*}
        \int_T\int_{\cX}\bigl(\hat\psi(x)-\phi(t)\bigr)\bigl(\hat\psi(x)-\phi(t)\bigr)^\top p(x,t)\mu(t) \,d\nu(x)\,dt
        \\
        \succcurlyeq
        \biggl(\int_T\nabla\phi(t)\alpha(t)\,dt\biggr)^\top
        \biggl(\int_T\frac{\cI(t)\alpha^2(t)+\nabla \alpha(t)\nabla \alpha(t)^\top}{\mu(t)}\,dt\biggr)^{-1}
        \biggl(\int_T\nabla\phi(t)\alpha(t)\,dt\biggr).
    \end{multline*}
    The result then follows by applying~\citet[Lemma 4]{gvt} (an application of the reverse triangle inequality).
    
\end{proof}

\subsection{Connections to~\citet{gvt}}\label{appsec:gvt}

In recent work~\citet{gvt} introduce the generalized van Trees inequality, which extends the van Trees inequality by the inclusion of an additional \emph{approximation function} $\phi:T\to\R$. This generalization allows one to obtain minimax lower bounds for non-differentiable
functionals and estimation problems in irregular statistical models. In the our univariate setting of Section~\ref{sec:gvt} their generalized van Trees inequality reduces to
\begin{equation*}
    \sup_{\phi}\left(\sqrt{\frac{\bigl(\int_T\phi'(t)\mu(t)\,dt\bigr)^2}{\int_T\cI(t)\mu(t)\,dt+\mathcal{J}(\mu)}}-\sqrt{\int_T\bigl(\phi(t)-t\bigr)^2\mu(t)\,dt}\right)_+^2.
\end{equation*}
As this isn't a setting to which one would expect the generalized van Trees approximation function, geared towards extending to allow for non-differentiable functionals, we expect in this specific setting the van Trees inequality to be recovered, i.e.
\begin{equation*}
    \sup_{\phi}\left(\sqrt{\frac{\bigl(\int_T\phi'(t)\mu(t)\,dt\bigr)^2}{\int_T\cI(t)\mu(t)\,dt+\mathcal{J}(\mu)}}-\sqrt{\int_T\bigl(\phi(t)-t\bigr)^2\mu(t)\,dt}\right)_+^2
    =
    \frac{1}{\int_T\cI(t)\mu(t)\,dt+\mathcal{J}(\mu)}
    ,
\end{equation*}
with supremum attained by taking $\phi=\mathrm{id}$. 
To see this, take $\phi(t)=t+\varphi(t)$ for some $\varphi:T\to\R$. Then
\begin{align*}
    &\quad\;
    \sqrt{\frac{\bigl(\int_T\phi'(t)\mu(t)\,dt\bigr)^2}{\int_T\cI(t)\mu(t)\,dt+\mathcal{J}(\mu)}}-\sqrt{\int_T\bigl(\phi(t)-t\bigr)^2\mu(t)\,dt}
    \\
    &=
    \frac{\bigl|\int_T\phi'(t)\mu(t)\,dt\bigr|}{\sqrt{\int_T\cI(t)\mu(t)\,dt+\mathcal{J}(\mu)}}-\sqrt{\int_T\bigl(\phi(t)-t\bigr)^2\mu(t)\,dt}
    \\
    &=
    \frac{\bigl|1-\int_T\varphi(t)\mu'(t)\,dt\bigr|}{\sqrt{\int_T\cI(t)\mu(t)\,dt+\mathcal{J}(\mu)}}-\sqrt{\int_T\bigl(\varphi(t)\bigr)^2\mu(t)\,dt}.
\end{align*}

Now, because $|1-x|\leq1+|x|$, and using the Cauchy--Schwarz inequality,
\begin{align*}
    \biggl|1-\int_T\varphi\mu'\biggr|
    \leq
    1+\biggl|\int_T\varphi\mu'\biggr|
    \leq
    1+\biggl(\int_T\varphi^2\mu\biggr)^{1/2}\biggl(\int_T\frac{(\mu')^2}{\mu}\biggr)^{1/2}
    =
    1+\sqrt{\int_T\varphi^2\mu\;}\sqrt{\mathcal{J}(\mu)}.
\end{align*}
Thus
\begin{align*}
    \sqrt{\frac{\bigl(\int_T\phi'(t)\mu(t)\,dt\bigr)^2}{\int_T\cI(t)\mu(t)\,dt+\mathcal{J}(\mu)}}-\sqrt{\int_T\bigl(\phi(t)-t\bigr)^2\mu(t)\,dt}
    &\leq
    \frac{1}{\sqrt{\int_T\cI\mu+\mathcal{J}(\mu)}}+
    \frac{\sqrt{\int_T\varphi^2\mu\;}\sqrt{\mathcal{J}(\mu)}}{\sqrt{\int_T\mathcal{I}\mu+\mathcal{J}(\mu)}}-\sqrt{\int_T\varphi^2\mu}
    \\
    &\leq 
    \frac{1}{\sqrt{\int_T\cI\mu+\mathcal{J}(\mu)}},
\end{align*}
and so
\begin{equation*}
    \sup_{\phi}\left(\sqrt{\frac{\bigl(\int_T\phi'(t)\mu(t)\,dt\bigr)^2}{\int_T\cI(t)\mu(t)\,dt+\mathcal{J}(\mu)}}-\sqrt{\int_T\bigl(\phi(t)-t\bigr)^2\mu(t)\,dt}\right)_+^2
    =
    \frac{1}{\int_T\cI(t)\mu(t)\,dt+\mathcal{J}(\mu)}.
\end{equation*}
This observation naturally separates the purposes of the \emph{approximation function} of~\citet{gvt} and our~\emph{augmentation function}. As we see in Section~\ref{sec:takatsu}, we can also incorporate \emph{both} the approximation and augmentation functions into an \emph{augmented generalized van Trees} inequality.

\end{document}